\numberwithin{equation}{section}
  \newcommand{\N}{\mathbb{N}}         
  \newcommand{\R}{\mathbb{R}}         
  \newcommand{\PP}{\mathcal{P}}
\newcommand{\iii}{\mathtt{i}}
\newcommand{\jjj}{\mathtt{j}}
  \newcommand{\e}{\varepsilon}
  \newcommand{\lam}{\lambda}
  \newcommand{\emdef}{\emph}
  \newtheorem{thm}{Theorem}[section]
  \newtheorem{lemma}[thm]{Lemma}
  \newtheorem{prop}[thm]{Proposition}
  \newtheorem{cor}[thm]{Corollary}
  \theoremstyle{remark}
\begin{document}

\title[The exceptional set for absolute continuity]{On the exceptional set for absolute continuity of Bernoulli convolutions}
\author{Pablo Shmerkin}

\subjclass[2010]{Primary 28A78, 28A80, secondary 37A45}
\keywords{Bernoulli convolutions, self-similar measures, Hausdorff dimension}

\thanks{The author was supported by a Leverhulme Early Career Fellowship}
\address{Department of Mathematics. University of Surrey,
Guildford, GU2 7XH. UK.}
\email{p.shmerkin@surrey.ac.uk}

\begin{abstract}
We prove that the set of exceptional $\lambda\in (1/2,1)$ such that the associated Bernoulli convolution is singular has zero Hausdorff dimension, and likewise for biased Bernoulli convolutions, with the exceptional set independent of the bias. This improves previous results by Erd\"{o}s, Kahane, Solomyak, Peres and Schlag, and Hochman. A theorem of this kind is also obtained for convolutions of homogeneous self-similar measures. The proofs are very short, and rely on old and new results on the dimensions of self-similar measures and their convolutions, and the decay of their Fourier transform.
\end{abstract}

\maketitle

\section{Introduction and statement of main results}

\subsection{Introduction}

We start by describing the contribution of this article in general terms; precise definitions and results are postponed to the following sections.

Although the Hausdorff dimension of self-similar and many other fractal measures on the real line can be calculated explicitly when the construction does not involve complicated overlaps (say, under the open set condition), the situation is dramatically more difficult in the overlapping case. In most problems of interest, there is a number $s$ which represents the ``symbolic'' dimension, or what the dimension would be in the absence of overlaps. In the self-similar case, this is simply the similarity dimension. There are two main regimes in the study of measures of overlapping construction:
\begin{itemize}
\item The sub-critical regime, $s<1$. In this case, one expects that ``typically'' the Hausdorff dimension of the measure equals $s$.
\item The super-critical regime, $s>1$. Here, the expectation is that ``typically'' the measure is absolutely continuous.
\end{itemize}
There are many results to the effect that for Lebesgue a.e. parameter in a given parametrized family, the expected behavior holds (in both regimes), and in many cases of interest it is was shown by Peres and Schlag \cite{PeresSchlag00} that the dimension of the exceptional set is less than full. However, it is believed that in most of the important cases, the set of exceptions is in fact countable, so a substantial gap remained. All of this applies, in particular, to the problem of absolute continuity for Bernoulli convolutions.

In the last few years, significant progress was achieved in verifying that the exceptional set in the sub-critical regime is indeed very small (countable or zero-dimensional) for many important fractal families, including convolutions of central Cantor measures \cite{NPS12}, convolutions of $\times p$-invariant measures \cite{HochmanShmerkin12}, projections of self-similar measures \cite{HochmanShmerkin12,Hochman13}, Bernoulli convolutions and other parametrized families of self-similar measures \cite{Hochman13}. However, the proofs of these results do not yield any progress on the absolute continuity of the measures in question in the super-critical regime.

In this article, we show that, for Bernoulli convolutions and convolutions of homogeneous self-similar measures, the exceptional parameters also for the absolute continuity problem in the super-critical regime are contained in a set of zero Hausdorff dimension. Other natural classes of examples will be treated in a forthcoming paper \cite{ShmerkinSolomyak13}.

Perhaps surprisingly, the proofs are very short, and rely on three main elements:
\begin{enumerate}
\item The fact that the Fourier transform of homogeneous self-similar measures has at least power decay off a zero dimensional set of contraction ratios. This essentially goes back to Erd\"{o}s \cite{Erdos40} and Kahane \cite{Kahane71}.
\item The recent dimension results alluded to earlier, and in particular the fact that in the super-critical region, the measures are already known to have full dimension off a zero-dimensional set. We emphasize that in each of our main examples (Theorems \ref{thm:homogeneous} and \ref{thm:convolution-ssm}), the corresponding dimension result was obtained through a completely different mechanism.
\item We employ the fact that the measures of interest have a convolution structure, and the previous steps, to express them as the convolution of a measure of full Hausdorff dimension, and another measure whose Fourier transform has power decay. Employing the convolution structure of Bernoulli convolutions to upgrade their smoothness is an old idea, again going back to Erd\"{o}s \cite{Erdos40} and also used in e.g. \cite{Solomyak95,PeresSolomyak96}. A novel feature of our decomposition is that each of the measures that we convolve plays a completely different r\^{o}le, and we rely on different results to show that each of them has the desired behavior off a zero dimensional parameter set.
\end{enumerate}

\subsection{Self-similar sets and measures}

We denote by $\mathcal{P}$ the set of Borel probability measures on $\R$.  If $f:\R\to \R$ is any function and $\mu\in\mathcal{P}$, we denote the push-forward of $\mu$ via $f$ by $f\mu$, i.e. $f\mu(B)=\mu(f^{-1}(B))$ for all Borel sets $B\subset\R$.

Recall that an \emdef{iterated function system} (IFS) is a finite family $\mathcal{F}=(f_1,\ldots, f_m)$ of strict contractions on some complete metric space $X$. It is well known that there exists a unique nonempty compact set $A=A(\mathcal{F})$ such that $A=\bigcup_{i=1}^m f_i(A)$. The IFS $\mathcal{F}$ satisfies the \emdef{strong separation condition} (SSC) if the pieces $f_i(A)$ are mutually disjoint.

We denote the open simplex in $\R^m$ by $\mathbb{P}_m$, i.e. $\mathbb{P}_m=\{ (p_1,\ldots,p_m): p_i> 0, \sum_{i=1}^m p_i=1\}$. We think of elements of the simplex as probability vectors. If  $p\in\mathbb{P}_m$, then there exists a unique measure $\mu=\mu(\mathcal{F},p)\in\mathcal{P}$ such that $\mu=\sum_{i=1}^m p_i\cdot f_i\mu$. The support of $\mu(\mathcal{F},p)$ is $A(\mathcal{F})$.

In this article we always assume that $X=\R$, and (with one exception) that the $f_i$ are similarities, in which case $A$ is known as a \emdef{self-similar set} and $\mu$ as a \emdef{self-similar measure}.

Recall that the \emdef{similarity dimension} $s=s(\mathcal{F})$ is the unique positive number $s$ such that $\sum_{i=1}^m r_i^s=1$, where $r_i$ is the contraction ratio of $f_i$. Further, if a probability vector $p\in\mathbb{P}_m$ is also given, the similarity dimension $s=s(\mathcal{F},p)$ is
\[
s= \frac{\sum_{i=1}^m p_i \log p_i}{\sum_{i=1}^m p_i \log r_i}.
\]
We denote Hausdorff dimension of sets by $\dim_H$ and \emdef{lower Hausdorff dimension} of  measures by $\dim$; this is defined as
\[
\dim\mu=\inf\{\dim_H(B):\mu(B)>0\}.
\]
It is well known that $\dim_H A(\mathcal{F})\le s(\mathcal{F})$, and likewise $\dim\mu(\mathcal{F},p)\le s(\mathcal{F},p)$, with equality under the SSC, see \cite[Corollary 5.2.3 and Theorem 5.2.5]{Edgar98}. In particular, if $s(\mathcal{F})<1$, then $A(\mathcal{F})$ has zero Lebesgue measure and, likewise, if $s(\mathcal{F},p)<1$, then $\mu(\mathcal{F},p)$ is singular with respect to Lebesgue measure (it is well known that self-similar measures are either absolutely continuous or purely singular). We follow a usual abuse of notation and speak of the similarity dimension of $A$ or $\mu$ when it is clear from context to which generating IFS we are referring.

\subsection{Bernoulli convolutions}

The Bernoulli convolution $\nu_\lambda^p$, corresponding to a contraction ratio $\lambda\in (0,1)$ and a weight $p\in (0,1)$, is the distribution of the random sum $\sum_{n=0}^\infty \pm \lambda^n$, where $P(+)=p$, $P(-)=1-p$ and all the choices are independent. In the unbiased case $p=1/2$ we simply write $\nu_\lambda=\nu_\lambda^{1/2}$. Alternatively, $\nu_\lambda^p$ is the self-similar measure corresponding to the IFS $(\lambda x-1,\lambda x+1)$ with weights $(p,1-p)$.  The survey article \cite{PSS00} provides an excellent overview of the major problems and results on Bernoulli convolutions up to the year 2000.

When $\lambda\in (0,1/2)$, the support of $\nu_\lambda$ is a self-similar Cantor set of Hausdorff dimension $\log 2/|\log \lam|<1$. For the critical value $\lambda=1/2$, $\nu_{\lambda}$ is normalized Lebesgue measure on its support.  The study of Bernoulli convolutions in the super-critical region $\lambda\in (1/2,1)$ was pioneered by Erd\"{o}s \cite{Erdos39, Erdos40}, who proved that if $1/\lambda$ is a Pisot number (i.e. an algebraic integer larger than $1$, all of whose algebraic conjugates lie in the open unit disk), then $\nu_\lambda$ is singular, and indeed $\widehat{\nu}_\lambda(\xi)\nrightarrow 0$ as $|\xi|\to\infty$ where here, and throughout the article, $\widehat{\mu}$ denotes the Fourier transform of a Borel finite measure $\mu$ on $\R$, defined as
\[
\widehat{\mu}(\xi) = \int e^{i \pi \xi x }d\mu(x).
\]
(We choose this somewhat non-standard normalization for practical reasons to become apparent later.) It is still an open problem to determine whether there are any other values of $\lambda\in (1/2,1)$ for which $\nu_\lambda$ is singular. We quickly review the history of the progress on this problem. Erd\"{o}s \cite{Erdos40} proved the existence of some $\delta>0$ such that $\nu_\lambda$ is absolutely continuous for almost all $\lambda \in (1-\delta,1)$. Kahane \cite{Kahane71} showed that in fact
\[
\dim_H(\{\lambda\in (1-\delta,1): \nu_\lambda \text{ is not absolutely continuous }\}) \to 0 \,\text{ as }\, \delta\to 0.
\]
See \cite[Section 6]{PSS00} for an exposition of the Erd\"{o}s-Kahane argument. While these results hold only for $\lambda$ near $1$, Solomyak obtained a major breakthrough in \cite{Solomyak95}, proving that $\nu_\lambda$ is absolutely continuous with an $L^2$ density for almost all $\lambda \in (1/2,1)$. A simpler proof was shortly after obtained by Peres and Solomyak \cite{PeresSolomyak96}. In \cite{PeresSchlag00}, Peres and Schlag improved Solomyak's result in two directions: they proved that for any $a\in (1/2,1)$, $\nu_\lambda$ has fractional derivatives in $L^2$ for $\lambda\in (a,1)$ outside of a set of Hausdorff dimension $d(a)<1$.

While the above discussion is for the unbiased case $p=1/2$, most of the results have extensions to the biased case. The similarity dimension of $\nu_\lam^p$ is
\[
s(\lam,p) = \frac{h(p)}{-\log\lambda} = \frac{-(p\log p+(1-p)\log(1-p))}{-\log\lambda}.
\]
It follows that if $s(\lam,p)<1$, or equivalently if $\lambda<p^p (1-p)^{1-p}$, then $\nu_\lam^p$ is always singular. Peres and Solomyak \cite{PeresSolomyak96,PeresSolomyak98} showed that if $p\in [1/3,2/3]$, then $\nu_\lambda^p$ is absolutely continuous for almost all $\lambda \in (p^p (1-p)^{1-p},1)$. Lindenstrauss, Peres and Schlag \cite{LPS02} improved on this by showing that the exceptional set of $\lambda$ can be taken to be independent of $p$. T\'{o}th \cite{Toth08} proved that even for $p$ outside of $[1/3,2/3]$, $\nu_\lambda^p$ is absolutely continuous for almost all $\lambda\in (1-\delta,1)$ for some explicit but non-optimal $\delta=\delta(p)$.

Very recently, Hochman \cite{Hochman13} made major progress in the dimension theory of self-similar measures, and in particular proved that for all $\lambda$ outside of a set of Hausdorff (and even packing) dimension $0$, and all $p\in (0,1)$,
\[
\dim\nu_\lambda=\min(s(\lam,p),1).
\]
See Theorem \ref{thm:zero-dim-excpetion-for-dim} below for a more general statement. While Hochman's method does not directly give any result on absolute continuity (see \cite[Section 1.5]{Hochman13}), in this paper we use his result to obtain a short proof of the following:
\begin{thm} \label{thm:BC-biased}
There exists a set $E\subset (1/2,1)$ of Hausdorff dimension $0$, such that $\nu_\lambda$ is absolutely continuous for all $p,\lambda$ such that $s(\lam,p)>1$ and $\lambda\in (1/2,1)\setminus E$.
\end{thm}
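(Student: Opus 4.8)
The plan is to exploit the convolution structure of $\nu_\lambda^p$ to write it as $\mu_1*\mu_2$, where $\mu_1$ is a self-similar measure of \emph{full} dimension (to be handled by Hochman's theorem) and $\mu_2$ is a homogeneous self-similar measure whose Fourier transform enjoys power decay (to be handled by the Erd\"{o}s--Kahane mechanism, item~(1) of the introduction). Concretely, fix an integer $k\ge 2$ and split the index set $\N_0$ according to residues modulo $k$: the terms with $n\equiv 0$ produce $\mu_2$, the law of $\sum_{j\ge 0}\pm\lam^{kj}$, which is exactly $\nu_{\lam^k}^p$; the remaining terms produce $\mu_1$, the law of $\sum_{n\not\equiv 0\,(k)}\pm\lam^{n}$. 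Since $\nu_\lam^p$ is the distribution of a sum of independent terms, $\nu_\lam^p=\mu_1*\mu_2$ and hence $\widehat{\nu_\lam^p}=\widehat{\mu_1}\,\widehat{\mu_2}$.

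The measure $\mu_1$ is again a homogeneous self-similar measure: its generating IFS consists of $2^{k-1}$ maps of common ratio $\lam^k$ (one for each sign pattern on a single period $\{0,\dots,k-1\}\setminus\{0\}$), with the weights inherited from $p$. A short computation gives its similarity dimension $s(\mu_1)=\tfrac{k-1}{k}\,s(\lam,p)$. Thus, whenever $s(\lam,p)>1$, choosing $k>1+1/(s(\lam,p)-1)$ forces $s(\mu_1)>1$. Applying Theorem~\ref{thm:zero-dim-excpetion-for-dim} to the real-analytic family $\lam\mapsto\mu_1$ then yields $\dim\mu_1=\min(s(\mu_1),1)=1$ for $\lam$ outside a set $E_k'$ of zero Hausdorff dimension, \emph{independent of $p$}. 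Simultaneously, the Erd\"{o}s--Kahane theorem supplies a zero-dimensional set $\widetilde E$, independent of $p$, so that $|\widehat{\nu_\eta^p}(\xi)|\le C|\xi|^{-\alpha}$ for some $\alpha=\alpha(\eta,p)>0$ whenever $\eta\notin\widetilde E$. Since $x\mapsto x^{1/k}$ is $(1/k)$-H\"older, the pullback $(\widetilde E)^{1/k}$ is again zero-dimensional, and $\lam\notin(\widetilde E)^{1/k}$ guarantees power decay of $\widehat{\mu_2}=\widehat{\nu_{\lam^k}^p}$.

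It remains to convert ``$\dim\mu_1=1$ and $\widehat{\mu_2}$ decays'' into absolute continuity of $\mu_1*\mu_2$. Fix $t\in(\max(0,1-2\alpha),1)$. Because $\dim\mu_1=1>t$, for $\mu_1$-a.e.\ $x$ there is $r_0(x)>0$ with $\mu_1(B(x,r))\le r^t$ for $r\le r_0(x)$; partitioning the ambient measure according to the size of $r_0(x)$ writes $\mu_1=\sum_n c_n\sigma_n$ with $\sum_n c_n=1$, where each probability measure $\sigma_n$ obeys a uniform Frostman bound and hence has finite energy $I_t(\sigma_n)<\infty$. Since $t>1-2\alpha$ we have $|\xi|^{-2\alpha}\le|\xi|^{t-1}$ for $|\xi|\ge 1$; combined with the standard identity $I_t(\sigma_n)\asymp\int|\widehat{\sigma_n}(\xi)|^2|\xi|^{t-1}\,d\xi<\infty$ and $|\widehat{\sigma_n}|\le 1$ near the origin, this gives $\widehat{\sigma_n*\mu_2}\in L^2$, so each $\sigma_n*\mu_2$ is absolutely continuous with an $L^1$ density of total mass $1$. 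Summing, $\nu_\lam^p=\sum_n c_n(\sigma_n*\mu_2)$ converges in $L^1$ and is therefore absolutely continuous. Setting $E=\bigcup_{k\ge 2}\bigl(E_k'\cup(\widetilde E)^{1/k}\bigr)$, a countable union of zero-dimensional sets and hence of zero Hausdorff dimension, finishes the proof: for $\lam\in(1/2,1)\setminus E$ and any $p$ with $s(\lam,p)>1$, pick $k$ as above to obtain the desired conclusion.

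The step I expect to be most delicate is not the final analytic combination (the exhaustion argument only needs $\dim\mu_1=1$ together with \emph{any} positive decay exponent $\alpha$, which is precisely what makes the balance trivial once $s(\lam,p)>1$), but rather securing both inputs with exceptional sets that are simultaneously zero-dimensional \emph{and} independent of $p$. This requires that the auxiliary families $\lam\mapsto\mu_1$ genuinely fall within the scope of Theorem~\ref{thm:zero-dim-excpetion-for-dim} uniformly in $k$ and $p$, and that the Fourier-decay set $\widetilde E$ may indeed be taken independent of the bias; verifying these uniformities is where the heart of the argument lies.
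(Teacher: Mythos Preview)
Your proposal is correct and follows essentially the same route as the paper: the decomposition $\nu_\lambda^p=\nu_{\lambda^k}^p * \mu(\mathcal{F}_\lambda^{(k)},p^{(k)})$ by residues mod $k$, Hochman's theorem applied to the ``skipped-digit'' factor to get dimension $1$, Erd\H{o}s--Kahane applied to $\nu_{\lambda^k}^p$ for the Fourier decay, and the exhaustion argument you spell out is exactly the content of Lemma~\ref{lem:convolutionAbsCont}(2). The uniformity in $p$ that you flag as delicate is in fact already built into the statements of Theorem~\ref{thm:zero-dim-excpetion-for-dim} and Proposition~\ref{prop:FourierDecay}; the only point you leave implicit (and should check) is the non-degeneracy hypothesis of Hochman's theorem for the family $\lambda\mapsto\mathcal{F}_\lambda^{(k)}$, which holds because the difference of two projections is a non-trivial power series in $\lambda$.
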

In fact, we prove a more general statement:
\begin{thm} \label{thm:homogeneous}
Let $a_1,\ldots, a_m$ be distinct fixed numbers, and for $\lam\in (0,1)$, let $\mathcal{F}_\lambda = (\lam x+a_1,\ldots, \lam x +a_m)$. There exists a set $E$ of zero Hausdorff dimension, such that if $\lam\in(0,1)\setminus E$ and $p\in\mathbb{P}_m$ is such that $s(\mathcal{F}_\lam,p)>1$, then $\mu(\mathcal{F}_\lambda,p)$ is an absolutely continuous measure.
\end{thm}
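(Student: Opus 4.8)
The plan is to exploit the convolution structure of $\mu:=\mu(\FF_\lam,p)$ by writing it as $\mu=\mu_1*\mu_2$, where $\mu_1$ will have full dimension and $\mu_2$ will have power Fourier decay, and then to combine these two properties. Writing $\mu$ as the law of the random series $\sum_{n\ge 0}a_{\omega_n}\lam^n$ with the $\omega_n$ i.i.d.\ and $P(\omega_n=i)=p_i$, I fix an integer $k\ge 2$ and split the index set $\N$ according to residues modulo $k$: let $\mu_2$ be the law of $\sum_{i\ge 0}a_{\omega_{ki}}\lam^{ki}$ and $\mu_1$ the law of the remaining terms $\sum_{n\not\equiv 0\,(k)}a_{\omega_n}\lam^n$. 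Since the two index sets are disjoint, the corresponding sums are independent, so $\mu=\mu_1*\mu_2$. Now $\mu_2=\mu(\FF_{\lam^k},p)$ is a homogeneous self-similar measure with the \emph{fixed} translations $a_i$ and ratio $\lam^k$, while $\mu_1$ is self-similar with ratio $\lam^k$, translations $t_u(\lam)=\sum_{r=1}^{k-1}a_{u_r}\lam^r$ indexed by $u\in\{1,\ldots,m\}^{k-1}$, and weights $\prod_r p_{u_r}$. A direct computation gives that the similarity dimensions of $\mu_1$ and $\mu_2$ equal $\tfrac{k-1}{k}s(\FF_\lam,p)$ and $\tfrac1k s(\FF_\lam,p)$ respectively, so, crucially, the similarity dimension of $\mu_1$ tends to $s(\FF_\lam,p)>1$ as $k\to\infty$.

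With this decomposition I would invoke the two main inputs. First, choosing $k$ so large that $\tfrac{k-1}{k}s(\FF_\lam,p)>1$, Theorem~\ref{thm:zero-dim-excpetion-for-dim}, applied to the real-analytically parametrized family $\lam\mapsto(t_u(\lam))_u$ generating $\mu_1$, yields $\dim\mu_1=1$ for all $\lam$ outside a set $D_k$ of zero Hausdorff dimension; here one first discards the countable set of $\lam$ at which two of the polynomials $t_u(\lam)$ coincide, so that the family is genuinely non-degenerate. Second, the Erd\"{o}s--Kahane power-decay result for homogeneous self-similar measures provides a zero-dimensional set $F_k$ of contraction ratios such that for $\lam\notin F_k$ one has $|\widehat{\mu_2}(\xi)|\le C|\xi|^{-\beta}$ for some $\beta=\beta(\lam,p)>0$ (passing between $\mu_2$ and $\mu(\FF_{\lam^k},p)$ only rescales the frequency and so preserves power decay). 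Setting $E=\bigcup_{k\ge 2}(D_k\cup F_k)$ gives a set of zero Hausdorff dimension by countable stability, independent of $p$; and for $\lam\notin E$ and any $p$ with $s(\FF_\lam,p)>1$ I may pick $k$ with $\tfrac{k-1}{k}s(\FF_\lam,p)>1$, whence both properties hold at once.

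It remains to pass from ``$\dim\mu_1=1$ and $\mu_2$ has power decay'' to ``$\mu$ is absolutely continuous'', and this is the delicate point: full dimension of $\mu_1$ does \emph{not} by itself furnish a finite energy integral, so $\mu_1*\mu_2$ need not obviously lie in $L^2$. I would circumvent this by restricting. We may assume $\beta<1/2$, since otherwise $\mu_2$ is already in $L^2$ and then $\mu=\mu_1*\mu_2$ is immediately absolutely continuous. Since $\dim\mu_1=1$ equals the essential infimum of the lower local dimension of $\mu_1$, and since the lower local dimension of any measure on $\R$ is at most $1$ almost everywhere, the lower local dimension of $\mu_1$ equals $1$ at $\mu_1$-a.e.\ point. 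By Egorov's theorem there are a compact set $K$ with $\mu_1(K)>0$, a number $r_0>0$, and a fixed $\eta<2\beta$ such that $\mu_1(B(x,r))\le r^{1-\eta}$ for all $x\in K$ and $r\le r_0$. A standard layer-cake estimate then shows that the restriction $\mu_1|_K$ has finite $(1-2\beta)$-energy, equivalently $\int|\widehat{\mu_1|_K}(\xi)|^2|\xi|^{-2\beta}\,d\xi<\infty$. Combining this with the decay of $\mu_2$ gives
\[
\int\bigl|\widehat{\mu_1|_K}(\xi)\,\widehat{\mu_2}(\xi)\bigr|^2\,d\xi\le C^2\int\bigl|\widehat{\mu_1|_K}(\xi)\bigr|^2|\xi|^{-2\beta}\,d\xi<\infty,
\]
so by Plancherel the measure $\rho:=(\mu_1|_K)*\mu_2$ is absolutely continuous with an $L^2$ density, and it is nonzero because $\mu_1(K)>0$. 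Finally, from $\mu=\mu_1*\mu_2\ge(\mu_1|_K)*\mu_2=\rho$ we conclude that $\mu$ has a nonzero absolutely continuous part; since $\mu$ is self-similar it is either purely absolutely continuous or purely singular, and therefore it is absolutely continuous.

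The step I expect to be most delicate is this last one: the implication from a full-dimensional convolution factor to absolute continuity. The naive hope that $\dim\mu_1=1$ forces $\mu_1$ to have finite energy just below exponent $1$ is false, and it is precisely the interplay between the restriction-to-$K$ device and the zero--one (absolute continuity versus singularity) law for self-similar measures that makes the argument work. A secondary technical point to check carefully is that Theorem~\ref{thm:zero-dim-excpetion-for-dim} genuinely applies to the $\lam$-dependent family generating $\mu_1$, which requires verifying the relevant non-degeneracy of the analytic parametrization $\lam\mapsto(t_u(\lam))_u$.
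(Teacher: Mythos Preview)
Your proposal is correct and follows essentially the same route as the paper: the same ``skip every $k$-th digit'' convolution decomposition $\mu=\mu_1*\mu_2$, Hochman's theorem for $\dim\mu_1=1$, Erd\H{o}s--Kahane for $\mu_2\in\mathcal D$, and a union over $k$. Two small remarks: first, the non-degeneracy you flag is not about discarding the countable set of $\lambda$ where two translations $t_u(\lambda)$ coincide, but about showing that for distinct \emph{infinite} sequences $\iii\neq\jjj$ the difference $\pi_{\mathcal F_\lambda^{(k)}}(\iii)-\pi_{\mathcal F_\lambda^{(k)}}(\jjj)$ is a non-trivial power series in $\lambda$ (which it is, since the $a_i$ are distinct); second, for the final step the paper takes $\mu_1(A_\e)>1-\e$ in Egorov and lets $\e\to 0$ to get absolute continuity of $\mu_1*\mu_2$ directly (this is Lemma~\ref{lem:convolutionAbsCont}(2)), whereas you use a single restriction plus the self-similar zero--one law---both arguments are valid and equally short.
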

Of course, Theorem \ref{thm:BC-biased} is simply the above with $m=2$, $a_1=-1, a_2=1$. Unfortunately, the proof does not seem to give any information about the densities. On the other hand, it is known that absolutely continuous self-similar measures are equivalent to Lebesgue measure on their support \cite{MauldinSimon98,PSS00}. Also, this method does not provide any new \emph{explicit} examples of absolutely continuous self-similar measures.

In addition to improving existing results on the dimension of exceptions (and giving the first absolute continuity results in part of the parameter region $p\notin [1/3,2/3]$), there is a noteworthy difference between our proof and the earlier results on absolute continuity \cite{Solomyak95,PeresSolomyak96, PeresSolomyak98, PeresSchlag00,LPS02}. Namely, in all those papers a.e. absolute continuity is first established in certain interval $(1/2,\lambda_*)$ (where $\lambda_*>2^{-2/3}$) by using the ``transversality'' method. The result is then extended to the whole interval $(1/2,1)$ by using the ``thinning and convolving'' technique. See the survey \cite{Soloymak04} for an exposition of these ideas. By contrast, in our approach transversality does not come in (the result of Hochman that we rely on uses a weak ``higher order'' version of transversality, but it is so weak that it is automatically satisfied on the whole of $(0,1)$).

In a forthcoming joint article with B. Solomyak we obtain versions of Theorem \ref{thm:homogeneous} in which the parameter comes in the translations rather than the contraction ratio. As an application, we are able to show that the orthogonal projections of homogeneous self-similar measures without rotations on $\R^2$, of dimension $>1$, are absolutely continuous, off a dimension zero set of angles.

\subsection{Convolutions of Cantor measures}

Our second main result concerns convolutions of self-similar measures. Let $A_\lambda$ be the support of $\nu_\lambda$ which, for $\lambda\in (0,1/2)$ is a self-similar Cantor set of dimension $\log 2/|\log\lambda|$. For sets $A,B\subset\R$, their arithmetic sum is $A+B=\{a+b:a\in A,b\in B\}$. For a fixed compact set $K\subset \R$, Peres and Solomyak \cite{PeresSolomyak98} proved that for almost all $\lambda\in (0,1/2)$, $A_\lambda+K$ has Hausdorff dimension $\log 2/|\log\lambda|+\dim_H(K)$ if this number is $\le 1$, and has positive measure otherwise. In their proof they in fact establish an analogous result for measures (with convolution in place of arithmetic sum). Peres and Shmerkin \cite{PeresShmerkin09} proved that, when $K$ is a scaled copy of $A_{\lambda'}$, there are only countably many exceptions for the dimension result (uniformly in the scaling):
\[
\dim_H(A_{\lambda}+ r A_{\lambda'}) = \min(\dim_H(A_\lambda)+\dim_H(A_{\lambda'}),1) \quad\text{if } \frac{\log\lambda}{\log\lambda'} \notin\mathbb{Q}, r\neq 0.
\]
A version of this result for measures was obtained in \cite{NPS12}; see Theorem \ref{thm:corr-dim-convolutions} below. In that paper, it was shown that one cannot expect a similar result in the super-critical regime. Namely, if $\lambda=1/3$, $\lambda'=1/4$, then there is a dense $G_\delta$ set of scalings $r$ such that $\nu_\lambda * (S_r \nu_{\lambda'})$ is a singular measure, where $S_r(x)=rx$ (note that the sum of the similarity dimensions is $>1$, hence we are in the super-critical case). The result holds whenever $\lambda,\lambda'$ are reciprocals of Pisot numbers and $\log\lambda/\log\lambda'\notin\mathbb{Q}$ \cite[Theorem 4.1]{NPS12},  thus giving an at least countable set of exceptions to the statement that $\nu_\lambda * (S_r \nu_{\lambda'})$ is absolutely continuous for all $r\in\mathbb{R}\setminus\{0\}$. We show that the exceptional set is zero-dimensional, in a more general context. We denote by $\text{HOM}_\lam$ the family of iterated function systems of the form $(\lam x+a_i)_{i=1}^m$. The \emdef{uniform} self-similar measure for $\mathcal{F}$ is $\mu(\mathcal{F},p)$ for $p=(1/m,\ldots,1/m)$.

\begin{thm} \label{thm:convolution-ssm}
There exists a set $E\subset (0,1)$ of zero Hausdorff dimension such that the following holds. Let $\lam_1,\lam_2\in (0,1)$ such that $\lam_1\notin E$ and $\log \lam_2/\log \lam_1\notin\mathbb{Q}$.

Let $\mathcal{F}_i\in\text{HOM}_{\lam_i}$ satisfy the SSC and suppose that $s(\mathcal{F}_1)+s(\mathcal{F}_2)>1$. Then $\mu_1 * (S_r\mu_2)$ is absolutely continuous for all $r\neq 0$, with fractional derivatives in $L^2$, where $\mu_i$ is the uniform self-similar measure generated by $\mathcal{F}_i$.

In particular, $A(\mathcal{F}_1)+r A(\mathcal{F}_2)$ has positive Lebesgue measure.
\end{thm}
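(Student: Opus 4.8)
The plan is to combine the two independent ingredients highlighted in the introduction: the Erd\H{o}s--Kahane power decay of the Fourier transform (ingredient (1)) and the correlation-dimension formula for convolutions, Theorem \ref{thm:corr-dim-convolutions} (ingredient (2)), glued together by a splitting of $\mu_1$ (ingredient (3)). The engine is the following elementary principle. Suppose $\eta\in\PP$ has correlation dimension at least $\tau$, equivalently $\int|\widehat{\eta}(\xi)|^2(1+|\xi|)^{t-1}\,d\xi<\infty$ for all $t<\tau$, and suppose $\rho\in\PP$ has power Fourier decay $|\widehat{\rho}(\xi)|\le C(1+|\xi|)^{-\sigma}$ with $\sigma>0$. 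Then
\[
\int |\widehat{\eta*\rho}(\xi)|^2(1+|\xi|)^{2\gamma}\,d\xi=\int|\widehat{\eta}(\xi)|^2|\widehat{\rho}(\xi)|^2(1+|\xi|)^{2\gamma}\,d\xi\le C^2\int|\widehat{\eta}(\xi)|^2(1+|\xi|)^{2\gamma-2\sigma}\,d\xi,
\]
which is finite whenever $2\gamma-2\sigma<\tau-1$. In particular, if $\tau=1$ (full correlation dimension), then $\eta*\rho$ has fractional derivatives in $L^2$ of every order $\gamma<\sigma$, and is thus absolutely continuous. The whole point is therefore to write $\mu_1*(S_r\mu_2)=\eta*\rho$ with $\eta$ of \emph{full} correlation dimension and $\rho$ of power decay, invoking a different theorem for each factor.

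To build this decomposition I would exploit the homogeneous structure of $\mu_1$. Writing $\nu_0$ for the uniform measure on the digit set of $\mathcal{F}_1$, one has $\mu_1=\ast_{n\ge0}S_{\lam_1^n}\nu_0$. Fixing an integer $k\ge2$ and grouping the scales by residues modulo $k$, this factors as a convolution of $k$ scaled copies of the homogeneous self-similar measure $\mu_1^{(k)}=\ast_{\ell\ge0}S_{\lam_1^{k\ell}}\nu_0$, which has contraction ratio $\lam_1^k$. I would then take $\rho=S_{\lam_1^{j_0}}\mu_1^{(k)}$ to be a single such copy, and $\mu_1'$ the convolution of the remaining $k-1$; both are again homogeneous self-similar measures of ratio $\lam_1^k$ (the second with a larger digit set), and $\mu_1=\mu_1'\ast\rho$. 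Since $\dim_2\mu_1^{(k)}=s(\mathcal{F}_1)/k$ under the separation inherited from the SSC of $\mathcal{F}_1$, the factor $\mu_1'$ has correlation dimension $(1-1/k)\,s(\mathcal{F}_1)$, and this exceeds $1-s(\mathcal{F}_2)$ once $k>s(\mathcal{F}_1)/(s(\mathcal{F}_1)+s(\mathcal{F}_2)-1)$, using $s(\mathcal{F}_1)+s(\mathcal{F}_2)>1$.

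With this choice I would set $\eta=\mu_1'\ast(S_r\mu_2)$. The factors $\mu_1'$ and $\mu_2$ are homogeneous self-similar measures of ratios $\lam_1^k$ and $\lam_2$, and $\log\lam_2/\log\lam_1^k=\frac1k\log\lam_2/\log\lam_1$ is still irrational, so Theorem \ref{thm:corr-dim-convolutions} applies (for $\lam_1$ outside a zero-dimensional set) and yields $\dim_2\eta=\min(\dim_2\mu_1'+s(\mathcal{F}_2),1)=1$ for every $r\ne0$. On the other hand $\rho$ is a homogeneous self-similar measure of ratio $\lam_1^k$, so by the Erd\H{o}s--Kahane power-decay property it satisfies $|\widehat{\rho}(\xi)|\le C(1+|\xi|)^{-\sigma}$ with $\sigma>0$, provided $\lam_1^k$ avoids a zero-dimensional set of ratios. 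Applying the principle above to $\mu_1*(S_r\mu_2)=\eta\ast\rho$ then gives fractional derivatives in $L^2$, hence absolute continuity; since an absolutely continuous measure with nonzero density has support of positive Lebesgue measure, and $\supp(\mu_1*(S_r\mu_2))=A(\mathcal{F}_1)+r\,A(\mathcal{F}_2)$, the final assertion follows.

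The exceptional set $E$ is assembled once and for all: for each $k$ it must contain the $\lam_1$ for which $\lam_1^k$ lies in the Erd\H{o}s--Kahane exceptional set, together with the ratios excluded by Theorem \ref{thm:corr-dim-convolutions}; each of these is zero-dimensional, the map $\lam\mapsto\lam^k$ preserves this, and a countable union over $k$ remains of zero Hausdorff dimension. The main obstacle I anticipate is precisely this compatibility and bookkeeping: one must ensure that the decay of $\rho$ and the full correlation dimension of $\eta$ can be secured on the \emph{same} dimension-zero set of $\lam_1$, uniformly in the translations $a_i$ and in $r$, and in particular that Theorem \ref{thm:corr-dim-convolutions} is genuinely available for $\mu_1'$. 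Verifying that the auxiliary IFS generating $\mu_1'$ inherits enough separation from the SSC of $\mathcal{F}_1$ to have the claimed correlation dimension (or, alternatively, arranging the splitting so that this is automatic) is the crux of the argument.
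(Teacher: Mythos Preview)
Your proposal is correct and is essentially the paper's own proof: the same splitting $\mu_1=\mu_1'\ast\rho$, the same appeal to Theorem~\ref{thm:corr-dim-convolutions} for $\eta=\mu_1'\ast S_r\mu_2$ and to Proposition~\ref{prop:FourierDecay} (Erd\H{o}s--Kahane) for $\rho$, and the same conclusion via Lemma~\ref{lem:convolutionAbsCont}. Two minor points: Theorem~\ref{thm:corr-dim-convolutions} has no exceptional set in $\lambda_1$ (only the irrationality and SSC hypotheses), so $E$ comes entirely from the Erd\H{o}s--Kahane side; and the SSC concern you flag is easily settled---after translating so that $0$ is one of the digits, the attractor of the IFS generating $\mu_1'$ is contained in $A(\mathcal{F}_1)$, so its first-level pieces lie inside the pairwise disjoint level-$(k{-}1)$ cylinders of $\mathcal{F}_1$.
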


We make some remarks on this statement.
\begin{enumerate}
\item Unlike Theorem \ref{thm:homogeneous}, we get some information about the densities here. This is because the results in \cite{NPS12} that we rely on are for correlation dimension rather than Hausdorff dimension.
\item Note that the exceptional set depends only on one of the contraction ratios (and the standard assumption that the ratios are incommensurable; this is necessary in general). It does not depend on the translations.
\item The family $\{A(\mathcal{F}_1)+t A(\mathcal{F}_2)\}_{t\in\R}$ is a reparametrization of the orthogonal projections of the product set $A(\mathcal{F}_1)\times A(\mathcal{F}_2$) (other than the vertical projection). Thus the theorem says that under the stated conditions on $\lam_1,\lam_2$, the only exceptions for Marstrand's Theorem (see e.g. \cite[Theorem 6.1]{Falconer03}) for these product sets are the principal directions.
\item There are analogs for self-similar measures with more general weights, and also for convolutions of more than two self-similar measures. These can be obtained by adapting the proof with help of the remarks in \cite[Section 5]{NPS12}. Details are left to the interested reader.
\item In connection with some deep problems related to homoclinic bifurcations, Palis \cite{Palis87} conjectured that ``generically'' the arithmetic sum of two Cantor sets in the real line either has dimension less than $1$, or has nonempty interior. This conjecture was verified in a variety of settings, notably by Moreira and Yoccoz \cite{MoreiraYoccoz01} in the context most relevant to Palis' motivation. Theorem \ref{thm:convolution-ssm} goes in a similar direction: although only positive Lebesgue measure (rather than nonempty interior) is established, the ``generic'' part is very strong, as the possible exceptions lie in a very small set.
\end{enumerate}

Many other variants of Theorem \ref{thm:convolution-ssm} are possible. We state only one, which uses \cite[Theorem 1.4]{HochmanShmerkin12} instead of the results of \cite{NPS12}. We say that an IFS $\mathcal{F}=(f_1,\ldots,f_m)$ on $\R$ is \emdef{regular} if each $f_i$ is a strictly increasing $C^{1+\e}$ map, and there is a nonempty open interval $I\subset\R$ such that $f_i(I)$ are mutually disjoint. A \emdef{Gibbs} measure on the attractor $A(\mathcal{F})$ is the projection of a Gibbs measure for a continuous potential on the symbol space $\{1,\ldots,m\}^\N$ under the projection $\pi_{\mathcal{F}}$. Given a regular IFS $\mathcal{F}$, we denote $\Lambda(\mathcal{F})= \{ \log g'_i(x_i)\}_{i=1}^m$, where $x_i$ is the fixed point of $g_i$.
\begin{thm}
Let $E$ be the exceptional set from (the proof of) Theorem \ref{thm:convolution-ssm}, and fix $\lambda\in (0,1)\setminus E$. Then for any IFS $\mathcal{F}\in\text{HOM}_\lam$, and any regular IFS $\mathcal{G}=(g_1,\ldots,g_n)$ such that $\log \lambda'/\log\lambda\notin\mathbb{Q}$ for some $\lambda'\in\Lambda(\mathcal{F})$, the following holds: for any self-similar measure $\mu$ for $\mathcal{F}$, and any Gibbs measure $\nu$ for $\mathcal{G}$ such that $\dim\mu+\dim\nu>1$, the convolution $\mu*\nu$ is absolutely continuous.

In particular, if $\dim_H(A(\mathcal{F}))+\dim_H(A(\mathcal{G}))>1$, then the arithmetic sum $A(\mathcal{F})+A(\mathcal{G})$ has positive Lebesgue measure.
\end{thm}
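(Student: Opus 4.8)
The plan is to follow the scheme behind Theorem~\ref{thm:convolution-ssm}, with the correlation-dimension input from \cite{NPS12} replaced by the Hausdorff-dimension statement of \cite[Theorem 1.4]{HochmanShmerkin12}, and to exploit the homogeneous self-similar structure of $\mu$ to produce a convolution factor with power Fourier decay. Since $\mathcal{F}\in\text{HOM}_\lambda$, write $\mu=\ast_{n\ge 0}S_{\lambda^n}D$, where $D$ is the finitely supported measure carrying the translations $a_i$ with the weights of $\mu$, and $S_t(x)=tx$. For an integer $k\ge 2$, split the scales according to divisibility by $k$ to obtain a factorisation $\mu=\eta_k*\zeta_k$, where $\eta_k=\ast_{j\ge 0}S_{\lambda^{kj}}D$ is the homogeneous self-similar measure of ratio $\lambda^k$ on $\{a_i\}$, and $\zeta_k\in\text{HOM}_{\lambda^k}$ is built from the remaining scales, its base measure being $\ast_{n=1}^{k-1}S_{\lambda^n}D$. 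Both factors are homogeneous self-similar of ratio $\lambda^k$.

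Next I would make the two factors play the two distinct roles from the introduction. The set $E$ coming from the proof of Theorem~\ref{thm:convolution-ssm} is a countable union of zero-dimensional sets, and may be taken to contain the Erd\H{o}s--Kahane exceptional set for every power $\lambda^k$ as well as the zero-dimensional exceptional set of the dimension theorem \cite{Hochman13}; this keeps it zero-dimensional. Thus for $\lambda\notin E$ we may assume, for each $k$: first, by \cite{Erdos40,Kahane71}, that $\eta_k$ has power Fourier decay, $|\widehat{\eta_k}(\xi)|\le C_k|\xi|^{-\sigma_k}$ with $\sigma_k>0$; and second, that $\dim\zeta_k=\min\!\big(\tfrac{k-1}{k}\,s(\mathcal{F},p),1\big)$, so that $\dim\zeta_k\to\dim\mu$ as $k\to\infty$. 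Because $\dim\mu+\dim\nu>1$ strictly, I fix $k$ so large that $\dim\zeta_k+\dim\nu>1$. The incommensurability hypothesis is preserved under raising to the power $k$, since $\log\lambda'/\log(\lambda^k)\notin\mathbb{Q}$ exactly when $\log\lambda'/\log\lambda\notin\mathbb{Q}$; hence \cite[Theorem 1.4]{HochmanShmerkin12} applies to the pair $(\zeta_k,\nu)$ and yields $\dim_H(\zeta_k*\nu)=\min(\dim\zeta_k+\dim\nu,1)=1$.

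It then remains to combine these two facts, and this is the crux of the argument. Writing $\rho=\zeta_k*\nu$, I must deduce that $\mu*\nu=\eta_k*\rho$ is absolutely continuous from the power decay of $\widehat{\eta_k}$ together with $\dim_H\rho=1$. The delicate point is that Hausdorff dimension is a $\liminf$-type quantity and by itself controls no energy integral, so the decay exponent $\sigma_k$ (which is uncontrolled and possibly small) cannot be matched against a global energy bound. The remedy is to localise: since $\dim_H\rho=1$, the lower local dimension of $\rho$ equals $1$ at $\rho$-almost every point, so for each $\delta>0$ Egorov's theorem produces a set $G$ with $\rho(G^c)<\delta$ on which $\rho(B(x,r))\le r^{1-\e}$ holds uniformly for all small $r$, where $\e<2\sigma_k$ is fixed in advance. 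A layer-cake estimate then gives $I_{1-2\sigma_k}(\rho|_G)<\infty$, and Plancherel combined with the decay of $\widehat{\eta_k}$ yields
\[
\int \bigl|\widehat{\eta_k*(\rho|_G)}(\xi)\bigr|^2\,d\xi \;\le\; C_k^2\int |\xi|^{-2\sigma_k}\,\bigl|\widehat{\rho|_G}(\xi)\bigr|^2\,d\xi \;\le\; C\, I_{1-2\sigma_k}(\rho|_G)\;<\;\infty,
\]
so $\eta_k*(\rho|_G)$ has an $L^2$ density, while $\eta_k*(\rho|_{G^c})$ has total mass at most $\delta$. Consequently the singular part of $\mu*\nu$ has mass at most $\delta$; letting $\delta\to 0$ shows it vanishes, and $\mu*\nu$ is absolutely continuous. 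The discarded set $G^c$ obstructs any global $L^2$ bound, which is precisely why, in contrast to Theorem~\ref{thm:convolution-ssm}, no information on the density is obtained.

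Finally, for the statement about arithmetic sums I would take $\mu$ to be the self-similar measure of maximal dimension for $\mathcal{F}$, of dimension $\dim_H A(\mathcal{F})$, and $\nu$ the Gibbs measure of maximal dimension for $\mathcal{G}$, of dimension $\dim_H A(\mathcal{G})$. Then $\dim\mu+\dim\nu>1$, so by the above $\mu*\nu$ is absolutely continuous and nonzero; since it is supported on $\supp\mu+\supp\nu\subseteq A(\mathcal{F})+A(\mathcal{G})$, that set carries a nonzero absolutely continuous measure and hence has positive Lebesgue measure. The main obstacle throughout is the combination step of the third paragraph, and its resolution is the localisation to a Frostman-regular subset of almost full measure, which converts the $\liminf$ dimension bound into a genuine energy estimate compatible with the power Fourier decay.
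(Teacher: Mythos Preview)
Your approach matches the paper's: it describes the proof as ``a minor variant of the proof of Theorem~\ref{thm:convolution-ssm} (using \cite[Theorem 1.4]{HochmanShmerkin12})'', and your decomposition $\mu=\eta_k*\zeta_k$, the power Fourier decay of $\eta_k$ from $\lambda^k\notin\widetilde E$, the appeal to \cite{HochmanShmerkin12} for $\dim(\zeta_k*\nu)=1$, and the Egorov/energy localisation (which is precisely Lemma~\ref{lem:convolutionAbsCont}(2), so the paper would simply cite it) are all on target.

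There is, however, one genuine issue in your second paragraph. You obtain $\dim\zeta_k=\min\big(\tfrac{k-1}{k}\,s(\mathcal{F},p),1\big)$ by enlarging $E$ to include ``the zero-dimensional exceptional set of the dimension theorem \cite{Hochman13}''. But that exceptional set arises from Theorem~\ref{thm:zero-dim-excpetion-for-dim} applied to the one-parameter family $\{\mathcal{F}^{(k)}_\lambda\}_\lambda$, and therefore depends on the translations $a_1,\ldots,a_m$ defining $\mathcal{F}$. In the present theorem $E$ is fixed \emph{first} (as the set from the proof of Theorem~\ref{thm:convolution-ssm}, which involves only the Erd\H{o}s--Kahane sets), and $\mathcal{F}\in\text{HOM}_\lambda$ is chosen afterwards and is arbitrary; you cannot absorb uncountably many translation-dependent Hochman exceptional sets into a single zero-dimensional $E$. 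The repair is easy and bypasses \cite{Hochman13} entirely: since $\eta_k$ and $\zeta_k$ are self-similar and hence exact-dimensional, $\mu=\eta_k*\zeta_k$ is a Lipschitz image of $\eta_k\times\zeta_k$, so
\[
\dim\mu \;\le\; \dim\eta_k+\dim\zeta_k \;\le\; \tfrac{1}{k}\,s(\mathcal{F},p)+\dim\zeta_k.
\]
This gives $\dim\zeta_k\ge\dim\mu-\tfrac{1}{k}\,s(\mathcal{F},p)$, which is all you need to ensure $\dim\zeta_k+\dim\nu>1$ for large $k$; the rest of your argument then goes through unchanged. (Incidentally, with your formula the limit of $\dim\zeta_k$ would be $\min(s(\mathcal{F},p),1)$, not $\dim\mu$; the inequality above, together with the trivial $\dim\zeta_k\le\dim\mu$, gives $\dim\zeta_k\to\dim\mu$ for the right reason.)
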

The proof is a minor variant of the proof of Theorem \ref{thm:convolution-ssm} (using \cite[Theorem 1.4]{HochmanShmerkin12}) and is left to the reader. As an example, we get:
\begin{cor}
If $D\subset\mathbb{N}$ is any finite set with at least two elements, and $B_D\subset [0,1]$ are the numbers whose continued fraction expansion has digits only in $D$, then $A_\lam+B_D$ has positive measure for all $\lam\in (0,1)\setminus E$ whenever $\dim_H(A_\lam)+\dim(B_D)>1$.
\end{cor}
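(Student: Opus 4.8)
The plan is to realize $A_\lam$ and $B_D$ as attractors of an IFS in $\text{HOM}_\lam$ and of a regular IFS, respectively, and then quote the preceding theorem. Only the range $\lam\in(0,1/2)$ requires work: for $\lam\in[1/2,1)$ the set $A_\lam$ is a nondegenerate interval, so $A_\lam+B_D$ already has positive Lebesgue measure, and $\dim_H A_\lam=1$ makes the hypothesis automatic (as $\dim B_D>0$). So I would fix $\lam\in(0,1/2)\setminus E$. There $A_\lam$ is the attractor of $\mathcal{F}_\lam=(\lam x-1,\lam x+1)\in\text{HOM}_\lam$, and the two branches are disjoint precisely because $\lam<1/2$, so $\mathcal{F}_\lam$ satisfies the SSC.

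Next I would identify $B_D$ as the attractor of a conformal IFS. Writing $g_d(x)=1/(d+x)$ for the inverse branches of the Gauss map, $B_D$ is the attractor of $(g_d)_{d\in D}$: the $g_d$ are real-analytic and the images $g_d((0,1))=(1/(d+1),1/d)$ are pairwise disjoint, so strong separation holds. The only defect is that each $g_d$ is \emph{decreasing}, whereas a regular IFS must consist of increasing maps. I would remove this by passing to the second iterate $\mathcal{G}=(g_d\circ g_{d'})_{(d,d')\in D^2}$: its maps are increasing (compositions of two decreasing maps), $C^\infty$, uniformly contracting on a neighbourhood of $B_D$, still strongly separated, and have the same attractor $B_D$ (the second iterate of an IFS has the same attractor). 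Hence $\mathcal{G}$ is a regular IFS with $A(\mathcal{G})=B_D$.

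The one genuine point is the incommensurability hypothesis (that some fixed-point contraction ratio of $\mathcal{G}$ be logarithmically incommensurable with $\lam$). For each $d$, the fixed point $x_d=[0;\overline d]$ of $g_d$ — equivalently the positive root of $t^2+dt-1=0$ — is also fixed by $g_d\circ g_d\in\mathcal{G}$, and since $g_d'(x_d)=-x_d^2$ one gets $(g_d\circ g_d)'(x_d)=x_d^4$. Thus the hypothesis holds as soon as $\log x_d/\log\lam\notin\mathbb{Q}$ for some $d\in D$. For each individual $d$ the set of $\lam$ with $\log x_d/\log\lam\in\mathbb{Q}$ is countable, so I would enlarge $E$ once and for all by the countable set $\bigcup_{d\in\mathbb{N}}\{\lam\in(0,1):\log x_d/\log\lam\in\mathbb{Q}\}$. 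This keeps $\dim_H E=0$ and, crucially, keeps $E$ independent of $D$, as the statement requires; for every $\lam\in(0,1)\setminus E$ the condition then holds for each, hence some, $d\in D$.

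Finally, with $\mathcal{F}=\mathcal{F}_\lam$ and $\mathcal{G}$ as constructed, the corollary's hypothesis reads $\dim_H A(\mathcal{F})+\dim_H A(\mathcal{G})=\dim_H A_\lam+\dim_H B_D>1$, which is exactly the assumption of the ``in particular'' clause of the preceding theorem; it then yields that $A_\lam+B_D=A(\mathcal{F})+A(\mathcal{G})$ has positive Lebesgue measure. The main obstacle is the orientation-reversing nature of the Gauss branches, handled by the second-iterate trick, together with the bookkeeping that makes the (slightly enlarged) exceptional set uniform in $D$; the remainder is a direct substitution into the preceding theorem.
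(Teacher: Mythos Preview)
Your proof is correct and follows the same route as the paper: realize $B_D$ as the attractor of a regular IFS and invoke the preceding theorem. The paper's proof is a single sentence deferring to \cite{HochmanShmerkin13} for the fact that the Gauss-branch IFS ``automatically satisfies the algebraic assumption''; the point there is that $\Lambda(\mathcal{G})$ already contains two elements with irrational ratio, so for \emph{every} $\lambda$ at least one of them is incommensurable with $\log\lambda$, and no enlargement of $E$ is needed. Your alternative---absorbing the countable set $\bigcup_{d\in\mathbb{N}}\{\lam:\log x_d/\log\lam\in\mathbb{Q}\}$ into $E$---is a legitimate and more self-contained workaround, at the cost of a nominally larger (but still zero-dimensional, and still $D$-independent) exceptional set. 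Your explicit second-iterate step to make the maps increasing is a detail the paper leaves implicit.
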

This is immediate from the fact that $B_D$ is the invariant set for a regular IFS which automatically satisfies the algebraic assumption (see e.g. \cite[Proof of Theorem 1.12]{HochmanShmerkin13}). 

\section{Background and preliminary results}

\subsection{Dimensions of measures}

We have already met the lower Hausdorff dimension of a measure $\mu$, $\dim\mu$. It is well-known that $\dim\mu$ can also be expressed in terms of local dimensions of $\mu$:
\begin{equation} \label{eq:local-dim}
\dim\mu = \text{essinf}_{x\sim \mu} \liminf_{r\searrow 0} \frac{\log\mu(B(x,r))}{\log r}.
\end{equation}
This is a version of the mass distribution principle, see e.g. \cite[Proposition 2.3]{Falconer97}. We will also need to make use of the \emdef{(lower) correlation dimension} of a measure:
\[
\dim_2\mu = \liminf_{r\searrow 0} \frac{\log \int \mu(B(x,r)) d\mu(x)}{\log r}.
\]
It holds that $\dim_2\mu\le \dim\mu$ for any measure $\mu$, with strict inequality possible.

Recall that the $s$-energy $I_s\mu$ of $\mu\in\PP$ is given by
\[
I_s\mu = \int\int \frac{d\mu(x)d\mu(y)}{|x-y|^s}.
\]
It is well-known (and easy to check) that $\dim_2\mu= \sup\{s\ge 0: I_s\mu<+\infty \}$. On the other hand, the energy can be expressed in terms of the Fourier transform of $\mu$, namely there is a constant $c(s)>0$ such that
\begin{equation} \label{eq:energy-Fourier}
I_s\mu = c(s)\int |\xi|^{s-1}|\widehat{\mu}(\xi)|^2 d\xi.
\end{equation}
See e.g. \cite[Lemma 12.12]{Mattila95}. Thus, if $s<\dim_2\mu$, then the Fourier transform of $\mu$ decays like $|\xi|^{-s/2}$ in average.

\subsection{Measures with power Fourier decay}

Let $\mathcal{D}$ be the class of measures whose Fourier transform has at least power decay at infinity:
\[
\mathcal{D} = \{\mu\in\mathcal{P}: |\widehat{\mu}(\xi)| \le C |\xi|^{-s} \text{ for some }C,s>0\}.
\]
This class will play a critical r\^{o}le, thanks to the following lemma.
\begin{lemma} \label{lem:convolutionAbsCont}
 Let $\nu\in\mathcal{D},\mu\in\mathcal{P}$.
\begin{enumerate}
\item If $\dim_2\mu=1$, then $\nu*\mu$ is absolutely continuous with a density in $L^2$, and even with fractional derivatives in $L^2$.
\item If $\dim\mu=1$, then $\nu*\mu$ is absolutely continuous.
\end{enumerate}
\end{lemma}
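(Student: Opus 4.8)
The plan is to reduce everything to Plancherel's theorem combined with the energy--Fourier identity \eqref{eq:energy-Fourier}. Fix a decay exponent $s_0>0$ of $\nu\in\mathcal{D}$, so that $|\widehat\nu(\xi)|\le C|\xi|^{-s_0}$ for $|\xi|\ge 1$, while $|\widehat\nu(\xi)|\le 1$ everywhere (and likewise $|\widehat\mu|\le 1$) since these are probability measures. Because $\widehat{\nu*\rho}=\widehat\nu\,\widehat\rho$, bounding $\nu*\rho$ in $L^2$ or in a Sobolev space amounts to estimating $\int|\widehat\nu(\xi)|^2|\widehat\rho(\xi)|^2\,d\xi$.

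The core estimate I would isolate is: \emph{if $\rho\in\PP$ has $\dim_2\rho>1-2s_0$, then $\nu*\rho\in L^2$, and in fact $\nu*\rho\in H^\gamma$ for some $\gamma>0$.} Indeed, pick $s\in(0,1)$ with $1-2s_0<s<\dim_2\rho$; then $I_s\rho<\infty$, so \eqref{eq:energy-Fourier} gives $\int|\xi|^{s-1}|\widehat\rho|^2\,d\xi<\infty$. For $|\xi|\ge 1$ the decay of $\widehat\nu$ yields $|\widehat\nu(\xi)|^2\le C^2|\xi|^{-2s_0}\le C^2|\xi|^{s-1}$ (since $-2s_0<s-1$), so the high-frequency part of $\int|\widehat\nu|^2|\widehat\rho|^2$ is finite, while the low-frequency part is trivially bounded because both transforms are $\le 1$. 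Replacing $|\widehat\nu(\xi)|^2$ by $|\xi|^{2\gamma}|\widehat\nu(\xi)|^2\le C^2|\xi|^{2\gamma-2s_0}$ and taking $\gamma>0$ small enough that $2\gamma-2s_0\le s-1$ still holds gives $\nu*\rho\in H^\gamma$, i.e.\ fractional derivatives in $L^2$. Part (1) is then the special case $\rho=\mu$, since $\dim_2\mu=1>1-2s_0$.

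For part (2) the difficulty is precisely that $\dim\mu=1$ does \emph{not} imply $\dim_2\mu=1$ (the correlation dimension can be strictly smaller, as noted above), so the core estimate cannot be applied to $\mu$ itself. My remedy is a Frostman decomposition of $\mu$ into countably many pieces of large correlation dimension. Fix $t$ with $1-2s_0<t<1$. By \eqref{eq:local-dim}, $\dim\mu=1$ means that for $\mu$-a.e.\ $x$ one has $\mu(B(x,r))\le r^{t}$ for all sufficiently small $r$; hence the sets $E_k=\{x:\mu(B(x,r))\le r^{t}\ \text{for all } 0<r\le 1/k\}$ increase to a set of full $\mu$-measure. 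On $E_k$ the restriction $\mu|_{E_k}$ obeys a Frostman bound with exponent $t$ at all scales (at small scales by definition, at large scales by finiteness of total mass), so $I_s(\mu|_{E_k})<\infty$ for all $s<t$, i.e.\ $\dim_2(\mu|_{E_k})\ge t>1-2s_0$; the same bound passes to the disjoint pieces $F_k=E_k\setminus E_{k-1}$.

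Applying the core estimate to each $\rho=\mu|_{F_k}$ shows that $\nu*(\mu|_{F_k})$ is absolutely continuous. Since the $F_k$ are disjoint with $\bigcup_k F_k$ of full $\mu$-measure, $\nu*\mu=\sum_k \nu*(\mu|_{F_k})$ is a countable sum of absolutely continuous measures of total mass $1$, and such a sum is absolutely continuous (a Lebesgue-null set is null for each summand, hence for the sum), completing part (2). The main obstacle is exactly this gap between $\dim$ and $\dim_2$: one must not demand $\dim_2=1$ (which may fail) but instead exploit that the decay exponent $s_0$ provides the slack $1-2s_0<t<1$, so that each Frostman piece is already good enough; the rest is a routine Plancherel computation.
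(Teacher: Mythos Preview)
Your proof is correct and follows essentially the same route as the paper. Part~(1) is identical in spirit: both exploit the energy--Fourier identity at an exponent just below $1$ so that the $|\xi|^{-2s_0}$ decay of $|\widehat\nu|^2$ absorbs the remaining weight and leaves room for a positive Sobolev index. For part~(2) the paper invokes Egorov's theorem to produce, for each $\varepsilon>0$, a set $A_\varepsilon$ of $\mu$-measure $>1-\varepsilon$ on which the restricted (normalized) measure has a uniform Frostman bound with exponent close to $1$, applies part~(1) to $\nu*\mu_\varepsilon$, and lets $\varepsilon\to 0$; your explicit exhaustion by the sets $E_k$ (and the disjoint pieces $F_k$) together with the countable-sum argument is just the constructive version of the same idea, avoiding the appeal to Egorov but otherwise matching the paper step for step.
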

\begin{proof}
By assumption, there is $t>0$ such that $\widehat{\nu}(\xi) = O(|\xi|^{-t})$. Since $\dim_2\mu=1$, it follows from \eqref{eq:energy-Fourier} applied to $s=1-t/2$ that
\[
\int |\xi|^{-t/2} |\widehat{\mu}(\xi)|^2 d\xi < \infty.
\]
By the convolution formula,
\[
\int |\xi|^{t/2} |\widehat{\nu*\mu}(\xi)|^2 d\xi <\infty.
\]
Thus $\nu*\mu$ has fractional derivatives in $L^2$, giving the first assertion.

For the second statement, we note that, thanks to Egorov's Theorem and \eqref{eq:local-dim}, for every $\e>0$ there are $C_\e>0$ and a set $A_\e$ with $\mu(A_\e)>1-\e$ such that $\mu_\e:= \mu|_{A_\e}/\mu(A_\e)$ satisfies
\[
\mu_\e(B(x,r)) \le C_\e\, r^{1-s/4} \quad\text{for all }x\in A_\e.
\]
In particular, $\dim_2\mu_\e\ge 1-s/4$. The same argument as above then shows that $\nu*\mu_\e$ is absolutely continuous. Letting $\e\to 0$ finishes the proof.
\end{proof}

It is known since Erd\"{o}s \cite{Erdos40} and Kahane \cite{Kahane71} that Bernoulli convolutions are in $\mathcal{D}$, outside a zero-dimensional set of parameters (Erd\"{o}s proved this for a Lebesgue null set of parameters; Kahane observed the argument yields in fact dimension zero). This result is a corollary of a combinatorial fact that we state separately as it will have other pleasant consequences for us. Given a real number $x$, let $\|x\|$ denote its distance to the closest integer.

\begin{prop} \label{prop:combinatorial}
Let $G_\ell$ be the set of all real numbers $\theta>1$ such that
\begin{equation} \label{eq:combinatorial}
\liminf_{N\to\infty}\frac{1}{N}\min_{t\in [1,\theta]}\left|\left\{ n\in \{0,\ldots,N-1\} : \| t \theta^n\|\ge 1/\ell  \right\}\right| > 1/\ell.
\end{equation}
Then for any $1<\Theta_1<\Theta_2<\infty$ there is a $C=C(\Theta_1,\Theta_2)>0$ such that
\[
\dim_H([\Theta_1,\Theta_2]\setminus G_\ell)\le \frac{C\log(C\ell)}{\ell}.
\]
\end{prop}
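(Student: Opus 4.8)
The plan is to run the Erd\"{o}s--Kahane counting argument in a form adapted to Hausdorff dimension. First I would unwind the failure of \eqref{eq:combinatorial}: if $\theta\in[\Theta_1,\Theta_2]\setminus G_\ell$, then the $\liminf$ in \eqref{eq:combinatorial} is $\le 1/\ell$, so there are infinitely many $N$ for which there is a $t=t(N)\in[1,\theta]$ with $\|t\theta^n\|<1/\ell$ for every $n\in\{0,\dots,N-1\}$ outside an exceptional set $S=S(N)$ of size $|S|\le 2N/\ell$. Writing $a_n=\mathrm{round}(t\theta^n)\in\mathbb{Z}$ and $\epsilon_n=t\theta^n-a_n$, my goal is to cover, for each such $N$, the set of admissible $\theta$ by few short intervals indexed by the integer sequence $(a_n)_{n=0}^{N-1}$, and then estimate the resulting Hausdorff-measure sums.

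The heart of the matter is a subexponential bound on the number of admissible sequences $(a_n)_{n<N}$, and here the key device is the second-order relation coming from $t\theta^{n+1}\cdot t\theta^{n-1}=(t\theta^n)^2$. Expanding $(a_{n+1}+\epsilon_{n+1})(a_{n-1}+\epsilon_{n-1})=(a_n+\epsilon_n)^2$ shows that the integer $a_{n+1}a_{n-1}-a_n^2$ has absolute value $\le C(\Theta_2)\,a_n/\ell$ whenever $n-1,n,n+1\notin S$. Since $a_{n-1}\ge 1$ and $a_n/a_{n-1}=\theta+O(1/\ell)\le\Theta_2+1$, this forces $a_{n+1}$ to lie in an interval of length $\le C(\Theta_2)/\ell$ around $a_n^2/a_{n-1}$, which contains at most one integer once $\ell\ge\ell_0(\Theta_2)$; thus at these ``triple-good'' steps $a_{n+1}$ is uniquely determined \emph{without any knowledge of $\theta$}. (For $\ell<\ell_0(\Theta_2)$ the asserted bound exceeds $1$ for a suitable $C$ and is vacuous, so I may assume $\ell\ge\ell_0$.) At the remaining steps --- those $n$ with $\{n-1,n,n+1\}\cap S\ne\emptyset$, of which there are at most $3|S|\le 6N/\ell$ --- I use only the crude bound $a_{n+1}\in[\theta a_n-\theta/2-1/2,\,\theta a_n+\theta/2+1/2]$, giving at most $\Theta_2+2$ choices each; the pair $(a_0,a_1)$ ranges over $O_{\Theta_2}(1)$ values since $t\in[1,\theta]$. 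Choosing the location of $S$ in at most $\binom{N}{\lfloor 2N/\ell\rfloor}$ ways (up to a polynomial factor) and invoking the entropy estimate for binomials, the number of admissible sequences is at most $\exp\!\big(C(\Theta_2)\,N\log(C\ell)/\ell\big)$.

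Next I would bound the diameter of the set of $\theta$ producing a fixed admissible sequence. If $\theta,\theta'$ (with multipliers $t,t'$) yield the same $(a_n)_{n<N}$, then at any two good positions $n_1<n_2$ lying in $[N/2,N-1]$ (which exist because $|S|$ is small) the quotients $a_{n_2}/a_{n_1}$ agree up to the $\epsilon$-errors, so $\theta^{\,n_2-n_1}$ and $\theta'^{\,n_2-n_1}$ differ by a relative amount $O(1/(\ell a_{n_1}))=O(\Theta_1^{-N/2}/\ell)$. Taking logarithms and dividing by $n_2-n_1\asymp N$ gives $|\theta-\theta'|\le C\,\Theta_1^{-N/2}$; hence each admissible sequence confines $\theta$ to an interval of length $\delta_N\le C\rho^{-N}$ for some $\rho=\rho(\Theta_1)>1$.

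Finally I would assemble the dimension bound. For every $N_0$ the set $[\Theta_1,\Theta_2]\setminus G_\ell$ is contained in $\bigcup_{N\ge N_0}$ (the cover at level $N$), which consists of at most $\exp(CN\log(C\ell)/\ell)$ intervals of length $\le\delta_N$. For any $s>\tfrac{C\log(C\ell)}{\ell\log\rho}$ the series $\sum_{N\ge N_0}\exp(CN\log(C\ell)/\ell)\,\delta_N^{\,s}$ converges and its tail tends to $0$, so $\mathcal{H}^s$ of the exceptional set vanishes and $\dim_H([\Theta_1,\Theta_2]\setminus G_\ell)\le\tfrac{C\log(C\ell)}{\ell\log\rho}$; absorbing $\log\rho$ into the constant yields the stated estimate. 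The one genuinely delicate point is the counting step: everything hinges on the quadratic relation $a_{n+1}a_{n-1}\approx a_n^2$ pinning down $a_{n+1}$ uniquely at triple-good steps, so that the exponential growth of the number of sequences is confined to the $O(N/\ell)$ steps polluted by $S$. I would need to check carefully that $a_{n-1}\ge 1$ throughout (so the relation is non-degenerate) and that the interval length $C(\Theta_2)/\ell$ is genuinely $<1$ for $\ell$ large.
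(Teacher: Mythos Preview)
Your proposal is correct and is precisely the Erd\H{o}s--Kahane covering argument that the paper invokes by citing \cite[Proposition 6.1]{PSS00} (the paper itself gives no independent proof): count admissible integer sequences via the second-order relation $a_{n+1}a_{n-1}\approx a_n^2$, bound the free steps by $O(N/\ell)$, and cover by intervals of length $\rho^{-N}$. One harmless slip: $n_2-n_1$ need not be $\asymp N$, but since already $n_2-n_1=1$ gives $|\theta-\theta'|=O(\Theta_1^{-N/2})$ from $a_{n_1}\gtrsim \Theta_1^{N/2}$, the diameter bound and hence the dimension estimate go through unchanged.
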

The proof of this is contained in the proof of \cite[Proposition 6.1]{PSS00}. We obtain the following consequence. This was observed by T. Watanabe \cite[Theorem 1.5]{Watanabe12}, but we include the proof for the reader's convenience.

\begin{prop} \label{prop:FourierDecay}
There exists a set $E\subset (0,1)$ of Hausdorff dimension zero such that the following holds. Suppose $\lam\in (0,1)\setminus E$. Then for any IFS of the form $\mathcal{F}=( \lambda x+a_1,\ldots, \lambda x+a_m)$ with all the $a_i$ different, and any $p\in\mathbb{P}_m$, the self-similar measure $\mu(\mathcal{F},p)$ belongs to $\mathcal{D}$.
\end{prop}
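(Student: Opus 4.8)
The plan is to write the Fourier transform of $\mu(\mathcal{F},p)$ as an infinite product and to extract power decay from Proposition~\ref{prop:combinatorial}. Setting $\phi(u)=\sum_{i=1}^m p_i e^{i\pi a_i u}$, the self-similarity relation $\mu=\sum_i p_i f_i\mu$ with $f_i(x)=\lambda x+a_i$ gives $\widehat{\mu}(\xi)=\phi(\xi)\widehat{\mu}(\lambda\xi)$, and iterating (using $\widehat{\mu}(0)=1$) yields
\[
\widehat{\mu}(\xi)=\prod_{n=0}^\infty \phi(\lambda^n\xi).
\]
Since each factor has modulus at most $1$, power decay will follow as soon as I can show that a positive proportion of the factors $\phi(\lambda^n\xi)$ are bounded away from $1$, with a uniformity in $\xi$ that holds for all large $\xi$.

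To quantify this I would fix one pair $i_0\ne j_0$, set $d=a_{i_0}-a_{j_0}\ne 0$, and use the identity $1-|\phi(u)|^2=\sum_{i<k}2p_ip_k\bigl(1-\cos(\pi(a_i-a_k)u)\bigr)$, keeping only the $(i_0,j_0)$ term and the elementary bound $1-\cos(\pi du)=2\sin^2(\pi du/2)\ge 8\|du/2\|^2$. This gives
\[
|\phi(u)|^2\le 1-16\,p_{i_0}p_{j_0}\,\bigl\|\tfrac{d}{2}u\bigr\|^2,
\]
so whenever $\|(d/2)u\|\ge 1/\ell$ the factor is at most $\rho:=\sqrt{1-16 p_{i_0}p_{j_0}/\ell^2}<1$. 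The constant $\rho$ may degenerate as $p$ approaches the boundary of $\mathbb{P}_m$, but this is harmless: the definition of $\mathcal{D}$ permits the decay rate to depend on the measure, so I need only the exceptional set, and not the rate, to be uniform in the $a_i$ and in $p$.

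Proposition~\ref{prop:combinatorial} enters through the set $E=(0,1)\setminus\bigcup_{\ell\in\N}\{\lambda:1/\lambda\in G_\ell\}$. For $\lambda\notin E$ fix $\ell$ with $\theta:=1/\lambda\in G_\ell$. Given large $\xi$, choose $M$ with $\lambda^M\xi\in[1,\theta)$ and reindex by $m=M-n$, so that $\lambda^n\xi=t_0\theta^m$ with $t_0\in[1,\theta)$. Absorbing the factor $d/2$ and the constant $t_0$ into a bounded exponent shift, I can write $\|(d/2)\lambda^n\xi\|=\|t'\theta^{m+k}\|$ for a suitable $t'\in[1,\theta)$ and bounded integer $k$. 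The $G_\ell$ condition — a minimum over all $t\in[1,\theta]$, hence valid for this particular $t'$ — then guarantees that at least a $1/\ell$ proportion of the relevant exponents satisfy $\|t'\theta^{m+k}\|\ge 1/\ell$, the shift and boundary terms affecting only $O(1)$ indices. Thus at least $\sim M/\ell$ of the factors are $\le\rho$, and since $M\asymp\log\xi/\log\theta$ this gives $|\widehat{\mu}(\xi)|\le\rho^{M/\ell}\le C|\xi|^{-s}$ with $s=-\log\rho/(\ell\log\theta)>0$, i.e.\ $\mu(\mathcal{F},p)\in\mathcal{D}$.

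It remains to check $\dim_H E=0$, which I expect to be routine. Since $E\subseteq\{\lambda:1/\lambda\notin G_\ell\}$ for every $\ell$ and $\lambda\mapsto 1/\lambda$ is bi-Lipschitz on each compact subinterval of $(0,1)$, Proposition~\ref{prop:combinatorial} bounds $\dim_H(E\cap[\lambda_1,\lambda_2])$ by $C\log(C\ell)/\ell$ for all $\ell$, hence by $0$; covering $(0,1)$ by countably many such intervals and invoking countable stability of Hausdorff dimension finishes the argument. The step requiring the most care is the reindexing matching $\prod\phi(\lambda^n\xi)$ to the normalization $\|t\theta^n\|$ of Proposition~\ref{prop:combinatorial} — in particular handling the factor $d/2$ and the range of $t$ — while the apparent difficulty of uniformity in $p$ is precisely dissolved by the measure-dependence allowed in the definition of $\mathcal{D}$.
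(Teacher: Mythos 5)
Your proof is correct and follows essentially the same route as the paper: the same exceptional set $E=\{\lambda:\lambda^{-1}\notin\bigcup_\ell G_\ell\}$, the same infinite product $\widehat{\mu}(\xi)=\prod_n\Phi(\lambda^n\xi)$, and the same application of Proposition \ref{prop:combinatorial} to get a positive proportion of factors bounded away from $1$. The only (cosmetic) difference is that you keep the $a_i$ unnormalized and extract the bound $|\phi(u)|\le\rho$ via the explicit identity $1-|\phi(u)|^2=\sum_{i<k}2p_ip_k\bigl(1-\cos(\pi(a_i-a_k)u)\bigr)$, whereas the paper first reduces to $a_1=0$, $a_2=1$ by translating and rescaling and then simply observes the existence of such a $\delta$; your bookkeeping of the factor $d/2$ as a bounded exponent shift is sound and only costs $O(1)$ indices.
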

\begin{proof}
Let
\[
E = \left\{\lam: \lam^{-1}\in  (1,\infty)\setminus \cup_{\ell\in \N} G_\ell\right\},
\]
where $G_\ell$ are the sets given by Proposition \ref{prop:combinatorial}. Then this proposition shows that $\dim_H(E)=0$.

Fix $\lambda\in (0,1)\setminus E$, distinct numbers $a_1,\ldots, a_m$ and $p\in\mathbb{P}_m$. By translating, scaling and relabeling (which does not affect the claim) we can, and do, assume that $a_1=0$ and $a_2=1$. Let $\mu$ be the corresponding self-similar measure. It is well-known that
\[
\widehat{\mu}(\xi) = \prod_{n=0}^\infty \Phi(\lambda^n \xi),
\]
where
\begin{equation} \label{eq:exponential-factor-in-FT}
\Phi(\zeta) = \sum_{j=1}^m p_j\, \exp(i \pi  a_j \zeta).
\end{equation}
(This follows easily either from the self-similarity or the fact that $\mu$ is an infinite convolution.) By assumption, there is $\ell\in\N$ such that \eqref{eq:combinatorial} holds with $\theta=\lam^{-1}$. We observe that there is $\delta>0$ (depending on $\ell$ and all the given data) such that $\left|\Phi(\zeta)\right|\le 1-\delta$ whenever $\|\zeta\|\ge 1/\ell$ (here we use our normalization $a_0=0; a_1=1$). Hence, if $\xi=t \lambda^{-N}$ with $t\in [1,\lambda^{-1}]$ and $N$ is large enough,
\[
|\widehat{\mu}(\xi)| \le \prod_{n=0}^{N-1} \left|\Phi(t\lambda^{-n})\right| \le (1-\delta)^{N/\ell} = O(|\xi|^{-s}),
\]
for $s=\tfrac{\log(1-\delta)}{\ell\log\lambda}>0$.
\end{proof}

The exceptional set $E$ from this proposition is closely connected to the exceptional set in our main theorems. Unfortunately, it appears that no explicit elements of $(0,1)\setminus E$ are known. Logarithmic decay has recently been established by Dai \cite[Proposition 2.5]{Dai12} and Bufetov and Solomyak \cite[Corollary 7.5]{BufetovSolomyak13} for some classes of algebraic $\lambda$, but this is not enough for our purposes. Dai, Feng and Wang \cite[Theorem 1.6]{DFW07} show that the Fourier transform of some self-similar measures has power decay, but it does not follow that the contraction ratios are in $(0,1)\setminus E$, nor does it lead to any new explicit examples of absolutely continuous self-similar measures. On the other hand, $E$ contains the reciprocals of Pisot numbers (since $\widehat{\nu}_\lam(\xi)\nrightarrow 0$ as $\xi\to\infty$ in this case) as well as reciprocals of Salem numbers (\cite{Kahane71}, see also \cite[Lemma5.2]{PSS00}); recall that an algebraic number $\theta>1$ is Salem if all of its algebraic conjugates lie on the closed unit disk, with at least one of them on the unit circle.

\section{Proofs of main results}

\subsection{Proof of Theorem \ref{thm:homogeneous}}

In order to prove Theorem \ref{thm:homogeneous}, we need the following result of Hochman. Recall that given an IFS $\mathcal{F}=(f_1,\ldots, f_m)$, the \emdef{projection map} $\pi=\pi_{\mathcal{F}}:\{1,\ldots, m\}^\N\to \R$ is given by
\[
\pi(\iii) = \lim_{n\to\infty} f_{i_1}\circ \cdots \circ f_{i_n}(0).
\]
The significance of this map is that self-similar set $A(\mathcal{F})$ is the image of $\pi$, and $\mu(\mathcal{F},p)$ is the push-down of the $p$-Bernoulli measure on $\{1,\ldots,m\}^\N$ under $\pi$.

\begin{thm}[\cite{Hochman13}, Theorem 1.8] \label{thm:zero-dim-excpetion-for-dim}
Let
\[
\big\{ \mathcal{F}_t = ( \lam_1(t)x+ a_1(t),\ldots, \lam_m(t)x+a_m(t) ) \big\}_{t\in I}
\]
be a one-parameter family of iterated function systems, where the maps $\lam_i:I\to (-1,1)\setminus\{0\}$ and $a_i:I\to\R$ are real analytic, and the following non-degeneracy condition holds:  for all distinct $\iii,\jjj\in\{1,\ldots,m\}^\N$, there is $t\in I$ such that $\pi_{\mathcal{F}_t}(\iii)\neq \pi_{\mathcal{F}_t}(\jjj)$.

Then there exists a set $E\subset I$ of zero Hausdorff (and even packing) dimension, such that if $t\in I\setminus E$ and $p\in\mathbb{P}_m$, then
\[
\dim\mu(\mathcal{F}_t,p) = \min(s(\mathcal{F}_t,p),1).
\]
\end{thm}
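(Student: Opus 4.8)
The plan is to deduce the theorem from two essentially independent ingredients: a \emph{deterministic} dimension statement, valid whenever $\mathcal{F}_t$ separates its cylinders at an exponential rate, and a \emph{parametric} statement, showing that this separation fails only on a negligible set of $t$. For a finite word $\mathtt{u}=u_1\cdots u_n$ write $f_{t,\mathtt{u}}=f_{t,u_1}\circ\cdots\circ f_{t,u_n}$, a similarity $x\mapsto r_{t,\mathtt{u}}x+b_{t,\mathtt{u}}$, and set
\[
\Delta_n(t)=\min\{|b_{t,\mathtt{u}}-b_{t,\mathtt{v}}|:\mathtt{u},\mathtt{v}\in\{1,\ldots,m\}^n,\ \mathtt{u}\neq\mathtt{v}\}.
\]
Say that $\mathcal{F}_t$ has \emph{exponential separation} if $\tfrac1n\log\Delta_n(t)$ does not tend to $-\infty$, and let $E=\{t\in I:\tfrac1n\log\Delta_n(t)\to-\infty\}$ be the complementary ``super-exponentially concentrated'' set. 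I would prove (i) $\dim\mu(\mathcal{F}_t,p)=\min(s(\mathcal{F}_t,p),1)$ for every $t\notin E$ and every $p\in\mathbb{P}_m$, and (ii) $\dim_P E=0$, where $\dim_P$ denotes packing dimension; together these give the theorem.

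For (i) I would argue by contraposition, passing from Hausdorff dimension to an entropy growth rate: self-similar measures are exact dimensional, with $\dim\mu=\lim_n\tfrac1n H(\mu,\mathcal{D}_n)$, where $\mathcal{D}_n$ is the partition of $\R$ into dyadic intervals of length $2^{-n}$ and $H$ is entropy in base $2$. Iterating self-similarity writes $\mu=\sum_{|\mathtt{u}|=n}p_{\mathtt{u}}\,f_{t,\mathtt{u}}\mu$, exhibiting $\mu$ at scale $2^{-n}$ as a superposition of $m^n$ rescaled copies of itself, placed at the translation parts $b_{t,\mathtt{u}}$ with weights $p_{\mathtt{u}}$. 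In the absence of coincidences this superposition realises the full similarity entropy, capped at the ambient value $1$; hence a dimension drop $\dim\mu<\min(s,1)$ forces a definite entropy deficit. The engine is Hochman's inverse theorem for the entropy of convolutions on $\R$: such a deficit forces, at a positive proportion of scales $2^{-k}$, an approximate dichotomy in which the component measures are concentrated on very few atoms. Bootstrapping this dichotomy across scales and translating it back into the symbolic combinatorics yields super-exponential clustering of the $b_{t,\mathtt{u}}$, i.e.\ $\Delta_n(t)\to 0$ super-exponentially, so $t\in E$. This multiscale argument, and especially the passage from the combinatorial inverse theorem to the geometric quantity $\Delta_n(t)$, is the technical core of the theorem and the step I expect to be by far the hardest.

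For (ii), set $\phi_{\mathtt{u},\mathtt{v}}(t)=b_{t,\mathtt{u}}-b_{t,\mathtt{v}}$; since the $\lam_i,a_i$ are real analytic and $|\lam_i|$ is bounded away from $1$ on each compact subinterval, the maps $t\mapsto b_{t,\mathtt{u}}$, and hence each $\phi_{\mathtt{u},\mathtt{v}}$, are real analytic. Fix a compact $J\subset I$ and $\alpha>0$, and let $\tilde E_\alpha=\{t\in J:\Delta_n(t)\le e^{-\alpha n}\text{ for all large }n\}$, so that $E\cap J\subseteq\bigcap_{\alpha>0}\tilde E_\alpha$. If $\mathtt{u},\mathtt{v}$ first differ in position $k$, then $\phi_{\mathtt{u},\mathtt{v}}=r_{t,\mathtt{u}|_k}$ times a ``leading-symbol-different'' difference of the tails; as the word lengths tend to infinity these reduced differences converge uniformly on $J$ (a normal family of analytic functions) to differences $\pi_t(\iii)-\pi_t(\jjj)$ of \emph{distinct} infinite words, distinct because their leading symbols differ. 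The non-degeneracy hypothesis guarantees that no such limit vanishes identically, and a compactness argument over this normal family then produces uniform constants $C,D$, independent of the pair, with
\[
\{t\in J:|\phi_{\mathtt{u},\mathtt{v}}(t)|\le\rho\}\ \text{ covered by at most } C \text{ intervals of length at most } C\rho^{1/D}.
\]
Securing this \emph{uniform} sublevel estimate across the infinitely many word pairs is the one delicate point of part (ii).

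Finally I would assemble the covering bound. Since $\{t:\Delta_n(t)\le e^{-\alpha n}\}$ is the union of the at most $m^{2n}$ sets $\{|\phi_{\mathtt{u},\mathtt{v}}|\le e^{-\alpha n}\}$, it is covered by $\lesssim m^{2n}$ intervals of length $\lesssim e^{-\alpha n/D}$. As $\bigcap_{n\ge N}\{t:\Delta_n(t)\le e^{-\alpha n}\}$ is contained in $\{t:\Delta_{n'}(t)\le e^{-\alpha n'}\}$ for every $n'\ge N$, letting $n'\to\infty$ shows its upper box dimension is at most $2D\log m/\alpha$; taking the countable union over $N$ gives $\dim_P\tilde E_\alpha\le 2D\log m/\alpha$, and hence $\dim_P(E\cap J)\le\inf_{\alpha>0}2D\log m/\alpha=0$. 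Exhausting $I$ by countably many such $J$ completes part (ii), and combined with (i) this proves the theorem. The same covering bounds the Hausdorff dimension as well, but it is the ``eventually'' structure of $\tilde E_\alpha$, allowing a single large $n'$ to be used for the cover, that yields the stronger packing-dimension-zero conclusion.
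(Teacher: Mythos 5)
You should first note that the paper contains no proof of this statement at all: it is quoted verbatim as Theorem 1.8 of \cite{Hochman13}, and the paper uses it as a black box. So your proposal can only be measured against Hochman's published argument. At the level of architecture it matches that argument well: the split into (i) a deterministic statement (exponential separation of level-$n$ cylinders implies $\dim\mu=\min(s,1)$, via exact dimensionality, the entropy characterization of dimension, and the inverse theorem for entropy of convolutions) and (ii) a parametric statement (the super-exponential-concentration set has packing dimension zero, via analyticity of $t\mapsto b_{t,\mathtt{u}}$, normal families, uniform zero-counting/sublevel bounds, and the $m^{2n}$ covering count at scale $e^{-\alpha n/D}$) is precisely Hochman's structure, and your final bookkeeping, including why the ``eventually all $n$'' form of $\tilde E_\alpha$ yields box (hence packing) bounds, is sound. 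Part (i) is, of course, only gestured at: the inverse theorem and the multiscale bootstrap that you correctly identify as the technical core constitute the bulk of \cite{Hochman13}, and a sketch that invokes them is summarizing, not proving.

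There is, however, one genuine defect: your $\Delta_n$ measures only the translation parts $b_{t,\mathtt{u}}$, and in the inhomogeneous setting of the theorem this is the wrong quantity. Take $m=2$, $\lambda_1(t)=t$, $\lambda_2(t)=t^2$, $a_1(t)=a_2(t)=1$ on an interval $I\subset(0,1/2)$. This family is non-degenerate: $\pi_t(\iii)=\sum_{n\ge 0}t^{e_n}$ where $e_0=0$ and $e_n=\sum_{k\le n}i_k$ is strictly increasing, so distinct infinite words give distinct power series, which cannot agree on all of $I$. Yet for every finite word $\mathtt{u}$ one has $f_{t,\mathtt{u}1}(0)=f_{t,\mathtt{u}}(1)=f_{t,\mathtt{u}2}(0)$, so $\phi_{\mathtt{u}1,\mathtt{u}2}\equiv 0$, hence your $\Delta_n(t)=0$ for every $n$ and every $t$; your exceptional set $E$ is all of $I$ and step (ii) is unsalvageable as stated, even though the theorem does apply to this family. (Step (i) survives your weaker $\Delta_n$, since the translation distance is dominated by the full distance between the maps, so the defect is confined to (ii).) This also exposes a hole in your limiting argument: an individual finite-word difference $\phi_{\mathtt{u},\mathtt{v}}$ can vanish identically even though every limit function $\pi_t(\iii)-\pi_t(\jjj)$ is non-degenerate. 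The repair is Hochman's definition: $\Delta_n$ must be the minimal distance between the \emph{maps} $f_{t,\mathtt{u}},f_{t,\mathtt{v}}$, comparing contraction parts as well as translations. Then $f_{t,\mathtt{u}}\equiv f_{t,\mathtt{v}}$ in $t$ would force $\pi_t(\mathtt{u}\mathtt{w})=\pi_t(\mathtt{v}\mathtt{w})$ for every infinite tail $\mathtt{w}$ and every $t$, contradicting non-degeneracy; and your normal-family/sublevel argument must be run on both components, noting that for pairs $\mathtt{u},\mathtt{v}$ that are permutations of one another the slope component vanishes identically, so there the translation component must carry the estimate. In the homogeneous situation in which the present paper actually invokes the theorem (the systems $\mathcal{F}_\lambda^{(k)}$ in the proof of Theorem \ref{thm:homogeneous} have common ratio $\lambda^k$), all level-$n$ slopes coincide, your definition agrees with Hochman's, and your sketch is correct as written.
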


\begin{proof}[Proof of Theorem \ref{thm:homogeneous}]
Fix $k\in \N$. Consider the IFS
\[
\mathcal{F}_{\lam}^{(k)} = \left( \lambda^k x+ \sum_{j=0}^{k-2} a_{i_{j+1}} \lambda^j \right)_{\iii\in \{1,\ldots,m\}^{k-1}}.
\]
Further, if $p\in\mathbb{P}_m$, we write
\begin{equation} \label{eq:def-p-iterate}
p^{(k)}=(p_{i_1}\cdots p_{i_{k-1}})_{\iii\in \{1,\ldots,m\}^{k-1}}.
\end{equation}
The weighted IFS ($\mathcal{F}_\lam^{(k)}, p^{(k)})$ corresponds to ``skipping every $k$-th digit of $(\mathcal{F}_\lam,p)$''. We make some simple observations:
\begin{enumerate}
\item \label{it:sim-dim-skip} For any $p\in\mathbb{P}_m$,
\[
s(\mathcal{F}_{\lam}^{(k)},p^{(k)})=\left(1-\frac{1}{k}\right)s(\mathcal{F}_\lam,p).
\]
\item \label{it:non-degenerate} The family $\{ \mathcal{F}_\lam^{(k)}\}_{\lam\in (0,1)}$ satisfies the non-degeneracy assumption in Theorem \ref{thm:zero-dim-excpetion-for-dim}. Indeed, if $\iii\neq\jjj$, then $\pi_{\mathcal{F}_\lam^{(k)}}(\iii)-\pi_{\mathcal{F}_\lam^{(k)}}(\jjj)$ is a non-trivial power series in $\lambda$ with bounded coefficients (because the $a_i$ are all distinct)
\item \label{it:convolution} $\mu(\mathcal{F}_\lam,p)= \mu(\mathcal{F}_{\lam^k},p)*\mu(\mathcal{F}_\lam^{(k)},p^{(k)})$. This is immediate either from the definition, or from realizing $\mu(\mathcal{F}_\lam,p)$ as a convolution of discrete measures $(\mu_n)_{n\ge 1}$ and then splitting the values of $n$ such that $k\nmid n$ (which yields $\mu(\mathcal{F}_\lam^{(k)},p^{(k)})$) and the values of $n$ such that $k\mid n$ (which yields  $\mu(\mathcal{F}_{\lam^k},p)$).
\end{enumerate}

Now \eqref{it:sim-dim-skip}, \eqref{it:non-degenerate} and Theorem \ref{thm:zero-dim-excpetion-for-dim} imply that there exists a set $E_k$ of Hausdorff dimension zero such that
\[
\dim\mu(\mathcal{F}_{\lam}^{(k)},p^{(k)}) =1 \quad\text{ if }\lam\in (0,1)\setminus E_k \text{ and } s(\mathcal{F}_\lam,p)>\frac{k}{k-1}.
\]
Let $E'_k=\{ \lambda:\lambda^k\in \widetilde{E} \}$, where $\widetilde{E}$ is the exceptional set from Proposition \ref{prop:FourierDecay}. Then $\dim_H(E'_k)=0$. Moreover, from \eqref{it:convolution} and Lemma \ref{lem:convolutionAbsCont}, we deduce that if $\lam\in (0,1)\setminus (E_k\cup E'_k)$, and $s(\mathcal{F}_\lam,p)>1+1/k$, then $\mu(\mathcal{F}_\lam,p)$ is absolutely continuous. This yields the claim, with exceptional set $E= \bigcup_{k=1}^\infty (E_k\cup E'_k)$.
\end{proof}

\subsection{Proof of Theorem \ref{thm:convolution-ssm}}

For the proof of Theorem \ref{thm:convolution-ssm}, we appeal to the following result from \cite{NPS12}.
\begin{thm} \label{thm:corr-dim-convolutions}
Let $\mathcal{F}_i\in\text{HOM}_{\lambda_i}$ satisfy the SSC, $i=1,2$, and let $\mu_i$ be the corresponding uniform self-similar measures. If $\log \lam_2/\log\lam_1\notin\mathbb{Q}$, then
\[
\dim_2(\mu_1 *S_r\mu_2) = \min\left(s(\mathcal{F}_1)+s(\mathcal{F}_2),1\right)\quad\text{for all }r\neq 0.
\]
\end{thm}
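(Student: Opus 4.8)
The plan is to prove the two inequalities $\dim_2\nu\le d$ and $\dim_2\nu\ge d$ separately, where I write $\nu=\mu_1*S_r\mu_2$, $s_i=s(\mathcal{F}_i)$ and $d=\min(s_1+s_2,1)$; the lower bound carries essentially all of the content and is the only place where incommensurability enters. Throughout I would use that for a uniform self-similar measure satisfying the SSC the correlation dimension is a genuine limit equal to the similarity dimension, $\dim_2\mu_i=s_i$, obtained by the classical evaluation of the correlation integral $C_{\mu_i}(\rho)=\int\mu_i(B(x,\rho))\,d\mu_i(x)$ using the uniform separation of cylinders; recall $\dim_2\mu=\sup\{s:I_s\mu<\infty\}$ from the preliminaries.

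For the upper bound $\dim_2\nu\le d$, two observations suffice and neither uses incommensurability. First, $\dim_2\nu\le\dim\nu\le 1$ always, since every subset of $\R$ has Hausdorff dimension at most $1$. Second, restricting the correlation integral of $\nu$ to the sub-event in which the two coordinates stay close in each factor gives the elementary bound $C_\nu(\rho)\ge C_{\mu_1}(\rho/2)\,C_{S_r\mu_2}(\rho/2)$; since $C_{S_r\mu_2}(\rho)=C_{\mu_2}(\rho/|r|)$ has the same exponent $s_2$ and both correlation integrals are genuine powers, this yields $C_\nu(\rho)\gtrsim_\e \rho^{s_1+s_2+\e}$ for small $\rho$, whence $\dim_2\nu\le s_1+s_2$. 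This is just the statement that a convolution is at most as spread out as its expected dimension.

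The lower bound $\dim_2\nu\ge d$ is the crux: I must show $C_\nu(\rho)\lesssim_\e\rho^{d-\e}$ for every $\e>0$. First I would write the correlation integral as the four-fold integral
\[
C_\nu(\rho)=\iiiint \mathbf{1}_{|(x-x')+r(y-y')|\le\rho}\,d\mu_1(x)\,d\mu_1(x')\,d\mu_2(y)\,d\mu_2(y'),
\]
and decompose $\mu_1$ into its level-$n_1$ cylinders and $\mu_2$ into its level-$n_2$ cylinders, with $n_1,n_2$ chosen so that $\lambda_1^{n_1}\approx|r|\lambda_2^{n_2}\approx\rho$. By the SSC, distinct level-$n_i$ cylinders are separated by gaps comparable to their diameter $\approx\rho$. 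The \emph{diagonal} contribution, in which $x,x'$ lie in a common $\mu_1$-cylinder and $y,y'$ in a common $\mu_2$-cylinder, is immediate: the chance of a common cylinder is $m_i^{-n_i}=\lambda_i^{n_i s_i}\approx\rho^{s_i}$, and rescaling each cylinder back to unit scale leaves an $O(1)$ residual integral, so the diagonal contributes $\approx\rho^{s_1+s_2}$, the expected answer when $s_1+s_2\le 1$.

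The hard part will be controlling the \emph{off-diagonal} contributions, where $x,x'$ lie in distinct $\mu_1$-cylinders and/or $y,y'$ in distinct $\mu_2$-cylinders but the two large offsets $x-x'$ and $r(y-y')$ nearly cancel to within $\rho$. Such cancellations are what can create anomalous concentration, and they are genuinely present under resonance: the singular examples of \cite{NPS12} (reciprocals of Pisot numbers with $\log\lambda_2/\log\lambda_1\in\mathbb{Q}$) arise precisely because the cylinder-gap structures of $\mu_1$ and $S_r\mu_2$ then line up at infinitely many scales. The plan is to bound, scale by scale, the number of pairs (gap of $\mu_1$, $r\cdot$gap of $\mu_2$) agreeing to within $\rho$, and to show this count does not exceed what the diagonal allows. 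The mechanism is that zooming $\nu$ into a level-one $\mu_1$-cylinder and rescaling by $\lambda_1^{-1}$ reproduces $\mu_1*S_{r/\lambda_1}\mu_2$, so peeling one $\mu_1$-level drifts the relative scale of the second factor by $\lambda_1^{-1}$; since $\log\lambda_1/\log\lambda_2$ is irrational, the orbit of this relative scale modulo the $\lambda_2$-scale structure equidistributes, and a Weyl-type equidistribution estimate shows that the proportion of scales at which the two gap structures nearly coincide is negligible. Feeding this non-resonance count into a renewal-type iteration of the scale decomposition should give $C_\nu(\rho)\lesssim_\e\rho^{s_1+s_2-\e}$; the iteration need only be run down until the accumulated exponent reaches $1-\e$, which is the source of the cap $\min(s_1+s_2,1)$ and reflects that once enough independent levels have been convolved the measure spreads with positive density and no further concentration gain is possible. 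Uniformity in $r\neq0$ is automatic here, since $r$ only fixes the initial offset between the two scale sequences and the equidistribution estimate is insensitive to it. Making the non-resonance count quantitative with the correct exponent, and interfacing it cleanly with the recursive scale decomposition, is where the real work lies.
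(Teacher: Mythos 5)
Your upper bound is correct and complete: the inclusion bound $C_\nu(\rho)\ge C_{\mu_1}(\rho/2)\,C_{S_r\mu_2}(\rho/2)$, together with the exact power behaviour $C_{\mu_i}(\rho)\approx\rho^{s_i}$ for uniform self-similar measures under the SSC and the trivial bound $\dim_2\nu\le\dim\nu\le 1$, does give $\dim_2\nu\le\min(s_1+s_2,1)$. Be aware, though, that the paper itself does not reprove Theorem \ref{thm:corr-dim-convolutions}: its ``proof'' is the citation of \cite[Theorem 1.1]{NPS12} plus the remark (\cite[p.~113]{NPS12}) that the argument there extends from Bernoulli convolutions to arbitrary systems in $\text{HOM}_{\lambda}$ satisfying the SSC. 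Your multi-scale skeleton for the lower bound --- the four-fold correlation integral, first-difference cylinder decomposition with the diagonal contributing $\approx\rho^{s_1+s_2}$, recursion via the identity that zooming into a level-one $\mu_1$-cylinder replaces $r$ by $r/\lambda_1$, and equidistribution of the resulting multiplicative phase to show that near-alignments of the two gap structures are rare --- is a fair reconstruction of the architecture of the cited proof, including the correct identification of the Pisot, rational-log-ratio examples as the genuine obstruction.

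As a standalone argument, however, the lower bound has a real gap, and you acknowledge it yourself: the non-resonance count \emph{is} the theorem, and it is not carried out. Three concrete points. First, no quantitative equidistribution rate is available: $\alpha=\log\lambda_2/\log\lambda_1$ is merely irrational and may be Liouville, so any ``Weyl-type equidistribution estimate'' can only be qualitative (for each $\e>0$, the density of bad matching scales is eventually below $\e$). Your renewal iteration must therefore be organized so that a density-$\e$ set of bad scales, on which only a trivial bounded-loss estimate holds, costs at most $O(\e)$ in the final exponent; any scheme requiring summable or geometrically decaying errors across scales would fail for Liouville ratios, and your sketch does not address this. Second, uniformity in $r$ is not ``automatic'' as you assert: each zooming step changes the scaling parameter multiplicatively, so the estimate you iterate must hold uniformly for the phase ranging over a multiplicative fundamental domain such as $[\lambda_1,1)$, not merely for the single given $r$; since $\dim_2$ is defined via a liminf, one cannot simply invoke the fixed-$r$ statement at each step, and this uniformity has to be built into the inductive statement (as it effectively is in \cite{NPS12}). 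Third, the key lemma you still owe is the per-scale estimate: conditioned on the first-difference levels matching, i.e.\ $\lambda_1^{m}\approx|r|\lambda_2^{n}=\sigma\gg\rho$, the probability that the two offsets cancel to within $\rho$ is $\lesssim(\rho/\sigma)^{\min(s_1+s_2,1)-\e}$ at all but an $\e$-proportion of matching scales $\sigma$ --- equivalently, a count of nearly-cancelling pairs of cylinder offsets at each scale. That counting lemma is the theorem in miniature; without it, the proposal is a sound reading guide to the proof in \cite{NPS12}, not a proof.
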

This was proved in \cite[Theorem 1.1]{NPS12} for the convolutions $\nu_{\lam_1} * (S_r\nu_{\lam_2})$. However, as remarked in \cite[p.113]{NPS12}, the proof extends to this generality with very minor changes.

\begin{proof}[Proof of Theorem \ref{thm:convolution-ssm}]
Let $\widetilde{E}$ be the exceptional set from Proposition \ref{prop:FourierDecay}, and set
\[
E=\{\lambda:\lambda^k\in \widetilde{E} \text{ for some }k\in\N \}.
\]
Let $\mathcal{F}_1,\mathcal{F}_2$ be as in the statement of the theorem, and suppose $\lam_1\in E$ and $\log\lam_1/\log\lam_2\notin\mathbb{Q}$. As in the proof of Theorem \ref{thm:homogeneous}, given $k\in\N$ we may find two IFS's $\mathcal{F}_1^{(k)},\mathcal{G}^{(k)}\in\text{HOM}_{\lam_1^k}$ with uniform self-similar measures $\mu_1^{(k)},\nu^{(k)}$, such that $\mathcal{F}_1^{(k)}$ satisfies the SSC and has similarity dimension $(1-1/k)s(\mathcal{F}_1)$, and $\mu_1 = \nu^{(k)}*\mu_1^{(k)}$.

By the definition of $E$, $\nu^{(k)}\in\mathcal{D}$. On the other hand, it follows from Theorem \ref{thm:corr-dim-convolutions} that
\[
\dim_2\left(\mu_1^{(k)}*S_r\mu_2\right) = \min\left((1-1/k)s(\mathcal{F}_1)+s(\mathcal{F}_2),1\right) = 1 \quad\text{for all }r\neq 0,
\]
provided $k$ is taken large enough. Since $\mu_1* S_r\mu_2 =  \nu^{(k)}*(\mu_1^{(k)}*S_r\mu_2)$,  we only need to apply Lemma \ref{lem:convolutionAbsCont} to finish the proof.
\end{proof}

\bigskip
\textbf{Acknowledgment}. I thank Boris Solomyak and Michael Hochman for useful comments on an early version of this note, as well as many inspiring conversations on related topics over the years.


\end{document}